\documentclass[12pt,reqno,oneside]{amsart}
      
\usepackage{amsmath,amsthm,amsfonts,amssymb}
\usepackage{indentfirst}
\usepackage{url}
\usepackage{hyperref}
\usepackage{graphicx}

\theoremstyle{plain}
\newtheorem{theorem}{Theorem}[section]

\newtheorem{lem}[theorem]{Lemma}
\newtheorem{prop}[theorem]{Proposition}

\theoremstyle{definition}
\newtheorem{defn}[theorem]{Definition}
\newtheorem{exa}[theorem]{Example}
\newtheorem{obs}[theorem]{Remark}

\numberwithin{equation}{section}

\newcommand{\bbN}{\mathbb{N}}

\newcommand{\bbE}{\mathbb{E}}

\newcommand{\mcu}{C^1(p,r,\lambda)}
\newcommand{\mcd}{C^2(p,r,\lambda)}
\newcommand{\mci}{C^i(p,r,\lambda)}
\newcommand{\mct}{C^3(p,r,\lambda,m)}

\newcommand{\ld}{\lambda^2(p,r)}
\newcommand{\li}{\lambda^i(p,r)}
\newcommand{\lt}{\lambda^3(p,r,m)}

\begin{document}

\baselineskip=18pt

\title{Dispersion as a survival strategy}



\author[Valdivino Vargas Junior]{Valdivino Vargas Junior}
\address[Valdivino Vargas Junior]{Institute of Mathematics and Statistics, Federal University of Goias, Campus Samambaia, 
CEP 74001-970, Goi\^ania, GO, Brazil}
\email{vvjunior@ufg.br}
\thanks{Valdivino Vargas was supported by PNPD-CAPES (1536114), F\'abio Machado by CNPq (310829/2014-3) and Fapesp (09/52379-8) and Alejandro Roldan by CNPq (141046/2013-9).}

\author[F\'abio P. Machado]{F\'abio~Prates~Machado}
\address[F\'abio P. Machado]{Statistics Department, Institute of Mathematics and Statistics, University of S\~ao Paulo, CEP 05508-090, S\~ao Paulo, SP, Brazil.}
\email{fmachado@ime.usp.br}

\author[Alejandro Rold\'an]{Alejandro~Rold\'an-Correa}
\address[Alejandro Rold\'an]{Instituto de Matem\'aticas, Universidad de Antioquia, Calle 67, no 53-108, Medellin, Colombia}
\email{alejandro.roldan@udea.edu.co}


\keywords{Branching processes, catastrophes, population dynamics}
\subjclass[2010]{60J80, 60J85, 92D25}
\date{\today}

\begin{abstract}
We consider stochastic growth models to represent population subject to catastrophes. 
We analyze the subject from different set ups considering or not spatial restrictions, whether 
dispersion is a good strategy to increase the population viability. We find out it strongly depends 
on the effect of a catastrophic event, the spatial constraints of the 
environment and the probability that each exposed individual survives when a disaster strikes.
\end{abstract}

\maketitle

\section{Introduction}
\label{S: Introduction}

Biological populations are often exposed to catastrophic events that cause mass extinction: Epidemics, natural disasters, etc. When mild versions of these disasters occur, survivors may develop strategies to improve the odds of their species survival. Some populations adopt dispersion as a strategy. 
Individuals of these populations disperse, trying to make new colonies that may succeed settling down depending on the new environment they encounter. Recently, Schinazi~\cite{S2014} and Machado \textit{et al.}~\cite{MRS2015} proposed stochastic models for this kind of population dynamics. For these models they concluded that dispersion is a good survival strategy. Earlier, Lanchier~\cite{Lanchier} considered the basic 
contact process on the lattice modified so that large
sets of individuals are simultaneously removed, which also models catastrophes. In this work there are qualitative
results about the effect of the shape of those sets on the survival of the process, with interesting
non-monotonic results, and dispersion is proved to be a better strategy in some contexts.

Moreover, Brockwell \textit{et al.}~\cite{BGR1982} and later Artalejo \textit{et al.}~\cite{AEL2007} considered a model for the growth of a population (a single colony) subject to collapse. In their model, two types of effects when a disaster strikes were analyzed separately, \textit{binomial effect} and \textit{geometric effect}. After the collapse, the survivors remain together in the same colony (there is no dispersion). They carried out an extensive analysis including first extinction time, number of individuals removed, survival time of a tagged individual, and maximum population size reached between two consecutive extinctions. For a nice literature overview and motivation see Kapodistria \textit{et al.}~\cite{KPR2016}.

Based on the model proposed by Artalejo \textit{et al.}~\cite{AEL2007}, and adapting some ideas from Schinazi~\cite{S2014} and Machado~\textit{et al.}~\cite{MRS2015}, we analyze growth models of populations subject to disasters, where after the collapse species adopt dispersion as a survival strategy. We show that dispersion is not always a good strategy to avoid the population extinction. It strongly depends on the effect of a catastrophic event, the spatial constraints of the environment and the probability that each exposed individual survives when a disaster strikes.

This paper is divided into four sections. In Section 2 we define and characterize three models for the growth of populations subject to collapses. In Section 3 we compare the three models introduced in Section 2 and determine under what conditions the dispersion is a good strategy for survival, due to space restrictions and the effects when a disaster strikes. Finally, in Section 4 we prove the results from Sections 2 and 3.

\section{Growth models}

First we describe a model presented in Artalejo \textit{et al.}~\cite{AEL2007}. This is a model for a population which sticks together in one colony, without dispersion.  The colony gives birth to a new individual at rate $\lambda>0$,  while collapses happen at rate $\mu$. If at a collapse time the size of the population is $i$, it is reduced to $j$ with probability $\mu_{ij}$. The parameters $\mu_{ij}$ are determinated by how the collapse affects the population size. 
Next we describe two types of effects.

$\bullet$ \textit{Binomial effect:} Disasters reach the individuals simultaneously and independently of everything else. Each individual survives with probability $p<1$ (dies with probability $q=1-p$), meaning that 
\[ \mu_{ij}^B ={ i \choose j} p^j q^{i-j}, \ 0\leq j\leq i.\]

$\bullet$  \textit{Geometric effect:} Disasters reach the individuals sequentially and the effects of a disaster stop as soon as the first individual survives, if there are any survivor. The probability of next individual 
to survive given that everyone fails up to that point is $p<1,$ which
means that   
\[ \mu_ {ij}^G = \left\{\begin{array}{ll} q^i, & j=0\\ pq^{i-j}, & 1\leq j \leq i.\end{array}\right.\]

The binomial effect is appropriate when the catastrophe affects the individuals in a independent 
and even way. The geometric effect would correspond to cases where
the decline in the population is halted as soon as
any individual survives the catastrophic event. This
may be appropriate for some forms of catastrophic
epidemics or when the  catastrophe has a sequential propagation effect like in the predator-prey models - the predator kills prey until it becomes satisfied. More examples can be found in Artalejo \textit{et al.}~\cite{AEL2007} and in Cairns and Pollett~\cite{CairnsPollett}.

%

\subsection{Growth model without dispersion} 

In Artalejo \textit{et al.}~\cite{AEL2007} the authors consider the binomial and the geometric 
effect separately as alternatives to the total catastrophe rule which instantaneously 
removes the whole population whenever a catastrophic event occurs. 

Here we consider a mixture of both effects, that is, with probability $r$ the group is striken sequentially (geometric effect) and with probability $1-r$ the group is striken 
simultaneously (binomial effect). More precisely, 
\[ \mu_{ij}:=r\mu_{ij}^G+(1-r)\mu_{ij}^B.\]

We assume that the collapse rate $\mu$ equals 1. The size of the population (number of individuals in the colony) at time $t$ is a continuous time Markov process $\left\{X(t):t\geq 0\right\}$ whose infinitesimal generator $(q_{ij})_{i,j\geq 0}$ is given by 
\[q_{ij}=\left\{\begin{array}{ll}\lambda, & j=i+1, \ i\geq 0, \\
\mu_{ij}, & 0\leq j <i, \\
-(\lambda+\sum_{j=0}^{i-1}\mu_{ij}), & i=j, \\
0& \text{otherwise.} \end{array}\right.\]

We also assume $X(0)=1$ and denote by $\mcu$ the process described by $\left\{X(t):t\geq 0\right\}$. When $r=0$ and $r=1$, we obtain the models considered in Artalejo \textit{et al.}~\cite{AEL2007}.

\begin{theorem}[Artalejo \textit{et al.}~\cite{AEL2007}]
\label{th:semdisp}
Let $X(t)$ a process $\mcu$, with $\lambda>0$ and   $0<p<1$. Then, extinction (which means  $X(t)=0$ for some $t>0$)  occurs with probability
$$\rho_1(r)=\left\{\begin{array}{ll}
1& \text{, when } r<1\\
\min\left\{\frac{1-p}{\lambda p}, 1\right\} & \text{, when} r=1.
\end{array}\right.$$
Moreover, if $r<1$, or $r=1$ and $\lambda p < 1-p,$ the time it takes until extinction has finite expectation. 
\end{theorem}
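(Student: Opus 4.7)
The plan is to split into the case $r = 1$ (pure geometric effect), for which one can solve the extinction equations in closed form via an ansatz, and the case $r < 1$, for which a Foster--Lyapunov argument drives the population to $0$ because every binomial catastrophe contracts the population by a factor of $p$ in expectation. The subcritical time-moment claim for $r = 1$ and $\lambda p < 1-p$ I would handle as an afterthought using a linear Lyapunov function on the embedded chain.

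For $r = 1$ the key observation is that $\sum_{j=0}^{i-1}\mu_{ij}^G = q$ (with $q := 1-p$) is independent of $i$, so the embedded jump chain at real transition times has state-independent total rate $\lambda + q$ and jumps from $i$ to $i+1$ with probability $a := \lambda/(\lambda+q)$, to $i-s$ with probability $bpq^{s-1}$ for $1 \leq s \leq i-1$, and to $0$ with probability $bq^{i-1}$, where $b := q/(\lambda+q)$. The extinction probabilities $h_i$ form the minimal nonnegative solution of
\[
h_i = a\,h_{i+1} + bp\sum_{s=1}^{i-1} q^{s-1}\,h_{i-s} + b\,q^{i-1}, \qquad i \geq 1,\ h_0 = 1.
\]
Substituting the ansatz $h_i = C\theta^i$ and matching the $\theta^i$ and $q^{i-1}$ coefficients separately yields two equations whose solution is $\theta_0 = q(\lambda+1)/\lambda$ (the nontrivial root of the quadratic $\lambda\theta^2 - (\lambda + q + \lambda q)\theta + q(\lambda+1) = 0$) and $C = (\theta_0 - q)/(p\theta_0)$, from which $h_1 = C\theta_0 = (\theta_0 - q)/p = q/(\lambda p)$. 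When $\lambda p > q$ one has $\theta_0 \in (0,1)$ and $h_i = C\theta_0^i$ is a valid $[0,1]$-valued solution, so by minimality it is the extinction probability; when $\lambda p \leq q$ the root $\theta_0 \geq 1$ makes this candidate inadmissible and the minimal nonnegative solution must be $h_i \equiv 1$.

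For $r < 1$ I would take $V(i) = i^\alpha$ with $\alpha \in (0,1)$ small and compute
\[
LV(i) = \lambda\bigl((i+1)^\alpha - i^\alpha\bigr) + r\bigl(\bbE[V(G_i)] - V(i)\bigr) + (1-r)\bigl(\bbE[V(B_i)] - V(i)\bigr),
\]
where $G_i$ and $B_i$ are the post-collapse sizes under the geometric and binomial rules, respectively. Jensen's inequality gives $\bbE[V(B_i)] \leq p^\alpha V(i)$, a direct expansion yields $\bbE[V(G_i)] - V(i) = O(i^{\alpha-1})$, and the birth term is $O(i^{\alpha-1})$; hence $LV(i) \leq (1-r)(p^\alpha - 1)V(i) + O(i^{\alpha-1})$, which is $\leq -\delta V(i)$ for $i \geq i_0$ and some $\delta > 0$. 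Combined with the fact that every state in $\{1,\ldots,i_0\}$ can reach $0$ in one step with positive probability (via a binomial catastrophe that annihilates the whole colony, at rate $(1-r)q^i > 0$), the standard Foster--Lyapunov criteria give both almost-sure extinction and finite expected extinction time. In the remaining subcritical case $r = 1$, $\lambda p < q$, the embedded walk has strictly negative mean drift and geometric down-jumps, so the linear Lyapunov function $V(i) = i$ yields $\bbE[\tau_0] < \infty$ on the embedded chain; this translates to finite expected extinction time in continuous time since events occur at rate $\lambda + q$.

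The most delicate step will be justifying that the ansatz $h_i = C\theta_0^i$ is the \emph{minimal} nonnegative solution in the supercritical $r = 1$ regime, and not merely a solution of the recursion. This follows from the standard characterization of absorption probabilities for denumerable Markov chains via truncation: $h_i$ is the increasing limit of absorption probabilities in the process killed upon leaving $\{0,1,\ldots,N\}$, and any nonnegative solution dominates each truncation. Once this is in place, the rest of the argument is bookkeeping.
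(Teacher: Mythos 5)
For the case $r<1$ your argument is essentially the paper's: the paper applies Foster's theorem to the embedded discrete-time chain with the linear function $f(i)=i+1$, whose drift equals $\frac{\lambda-i(1-r)q}{1+\lambda}-\frac{rq(1-q^{i})}{p(1+\lambda)}$ and tends to $-\infty$, giving ergodicity and hence almost sure extinction with finite expected time; your $V(i)=i^{\alpha}$ computation with Jensen's inequality is a correct (slightly heavier) variant of the same idea. Note, however, that the paper does not prove the $r=1$ case at all --- it cites Artalejo \textit{et al.} --- so the geometric-effect analysis is entirely your own, and that is where the genuine gaps are.

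Two steps in your $r=1$ treatment do not go through as written. First, in the supercritical regime $\lambda p>q$ you exhibit the nonnegative solution $h_0=1$, $h_i=C\theta_0^{i}$ with $\theta_0=q(\lambda+1)/\lambda$, and appeal to ``minimality''; but the truncation argument you invoke proves only that the true hitting probability $u_i$ of state $0$ is the \emph{minimal} nonnegative solution, hence $u_i\le h_i$, i.e. $u_1\le q/(\lambda p)$. It does not show that your explicit candidate is that minimal solution, so the matching lower bound $u_1\ge q/(\lambda p)$ is missing. The standard repair: since $u_1\le q/(\lambda p)<1$ the embedded chain cannot be recurrent, so on the event of non-extinction it tends to infinity; because $h$ is harmonic off $0$ and bounded (as $\theta_0<1$), the stopped process $h(Y_{n\wedge\tau_0})$ is a bounded martingale, and letting $n\to\infty$ gives $h_1=\mathbb{P}_1[\tau_0<\infty]$, which closes the gap. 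Second, the critical case $\lambda p=q$ is not handled: the claim that ``the minimal nonnegative solution must be $h_i\equiv1$'' is an assertion, not an argument, and at criticality your candidate itself degenerates to the constant $1$, which says nothing about minimality. There the embedded chain has strictly positive mean drift $q^{i+1}/\bigl(p(\lambda+q)\bigr)$ at every state $i$, so recurrence (hence extinction with probability one) requires an additional argument, e.g. a Lamperti-type recurrence criterion or the generating-function computation of Artalejo \textit{et al.} Your final paragraph (linear Lyapunov function when $\lambda p<q$) is fine and also gives almost sure extinction in that subcase, so only the critical case and the supercritical lower bound remain to be supplied.
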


\begin{obs} 
The result of Theorem~\ref{th:semdisp} has been shown by Artalejo \textit{et al.}~\cite{AEL2007} for the cases $r=0$ and $r=1$. 
They use the word \textit{extinction} to describe the event that 
$X(t) = 0$, for some $t>0$, for a process where state 0 is not 
an absorbing state. In fact the extinction time here is the first hitting time to the state 0. We keep using the word extinction for this model trough the paper.

From their result one can see that survival is only possible when the effect is purely geometric ($r=1$). The reason for that
is quite clear: If $r<1$ the binomial effect strikes at rate $(1-r)>0$ so even if one
considers $p=1$ when the geometric effect strikes, the population will die out as proved
in Artalejo \textit{et al.}~\cite{AEL2007} for the case $r=0$.
\end{obs}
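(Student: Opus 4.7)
The plan is to quote the endpoints $r=0$ and $r=1$ from Artalejo \emph{et al.}~\cite{AEL2007}---which already give the claimed extinction probability and finite mean extinction time in those cases---and to handle the interior range $r\in(0,1)$ via a Foster--Lyapunov drift argument on the embedded chain sampled at catastrophe times.

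Fix $r\in(0,1)$ and let $Y_n$ denote the colony size immediately after the $n$-th catastrophe. Between two consecutive catastrophes the waiting time is $\mathrm{Exp}(1)$ and births arrive at rate $\lambda$, so the number of newborns $B$ in such an interval is geometric with $E[B]=\lambda$, and $Y_{n+1}$ is obtained by applying the mixed catastrophe rule to $Y_n+B$. The key one-step bound is on the mean number of survivors from a pre-catastrophe size $i$: the binomial effect contributes mean $ip$, and the geometric effect trivially leaves at most $i$ survivors, so the mixture satisfies
\[
E[\text{survivors}\mid i]\;\le\; r\cdot i+(1-r)\cdot ip\;=\;c\,i,\qquad c:=r+(1-r)p.
\]
Since $r<1$ and $p<1$ force $c<1$, integration over $B$ yields $E[Y_{n+1}\mid Y_n=y]\le c(y+\lambda)$.

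With the linear Lyapunov function $V(y)=y$, this drift inequality is exactly the Foster--Lyapunov condition for positive recurrence: there exist $\varepsilon>0$ and a finite set $F\subset\mathbb{Z}_{\ge 0}$ such that $E[V(Y_{n+1})-V(Y_n)\mid Y_n=y]\le(c-1)y+c\lambda\le-\varepsilon$ for $y\notin F$, with finite one-step expectations everywhere. The chain $\{Y_n\}$ is irreducible on $\mathbb{Z}_{\ge 0}$ because state $0$ is not absorbing---between catastrophes, births can lift the colony above zero---so standard theory gives that $\{Y_n\}$ is positive recurrent and the first hitting time $\tau_0$ of state $0$ satisfies $E[\tau_0]<\infty$. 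Since $X(t)$ is non-decreasing between catastrophes, the continuous-time extinction time equals the time of the $\tau_0$-th catastrophe, a sum of $\tau_0$ i.i.d.\ $\mathrm{Exp}(1)$ gaps; a Wald-type identity (with $\tau_0$ a stopping time for the catastrophe Poisson process) then shows the mean extinction time equals $E[\tau_0]<\infty$, and in particular $\rho_1(r)=1$.

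The step I expect to require the most care is verifying that the crude bound ``survivors $\le i$'' for the geometric portion is actually enough: one might fear that the geometric effect, which typically removes only $O(1)$ individuals, makes the contraction constant $c$ illusory. But as long as the binomial weight $1-r$ is strictly positive and $p<1$, the binomial part alone produces a genuine multiplicative contraction with $c=r+(1-r)p<1$, which is all the Foster--Lyapunov criterion needs. The remaining points---irreducibility despite the non-absorption of $0$, and the passage from a discrete-time hitting-time bound to a continuous-time one---are routine.
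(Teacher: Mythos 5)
Your argument is correct, and at its core it is the same device the paper uses to back this remark: a Foster--Lyapunov drift argument with a linear test function, concluding positive recurrence of an embedded chain and hence almost-sure extinction with finite mean extinction time for $r<1$, plus Wald's identity (via the rate-$(\lambda+1)$ or rate-$1$ exponential gaps) to transfer finiteness back to continuous time. The differences are worth noting, though. The paper's Section 4 proof embeds at \emph{every} jump (births and collapses), keeps the exact transition probabilities $P_{i,j}$, and computes the drift of $f(i)=i+1$ in closed form, $\frac{\lambda-i(1-r)q}{1+\lambda}-\frac{rq(1-q^i)}{p(1+\lambda)}$, from which the finite exceptional set is read off; you instead sample only at catastrophe epochs and discard the geometric term entirely, bounding the post-collapse mean by $\bigl(r+(1-r)p\bigr)i$. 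Your cruder bound is sufficient precisely because the contraction comes from the binomial component alone, which is a quantitative version of the heuristic the remark itself invokes (the paper's remark argues instead by comparison with the pure-binomial $r=0$ case of Artalejo \emph{et al.}, i.e.\ ``even if $p=1$ for the geometric strikes''). Your worry about the geometric part being only $O(1)$ removals is correctly dismissed: Foster's criterion only needs the mixed one-step mean to contract. The remaining points you label routine (irreducibility and aperiodicity of the catastrophe-sampled chain, which hold since births occur at rate $\lambda$ even from state $0$; finiteness of the mean hitting time of $0$ from the initial state for an irreducible positive recurrent chain; the Wald step with the stopping time $\tau_0$) are indeed routine and are treated with no more detail in the paper itself.
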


\subsection{Growth model with dispersion but no spatial restriction.} 
Consider a population of individuals divided into separate colonies. Each colony begins with  
an individual. The number of individuals in each colony increases independently according 
to a Poisson process of rate $\lambda > 0 $. Every time an exponential time of mean 1 occurs, the colony 
collapses through a binomial or a geometric effect and each of the collapse survivors 
begins a new colony independently of everything else.
We denote this process by $\mcd$ and consider it starting from a single colony with just one
individual.

The following theorem establishes necessary and sufficient conditions for survival in
$\mcd.$

\begin{theorem}\label{th:disp1} 
The process $\mcd$ survives with positive
probability if and only if
\begin{equation}\label{eqthdisp1}
 \frac{p(\lambda+1)^2r}{\lambda p+1} +p(\lambda+1)(1-r)>1.
\end{equation}
\end{theorem}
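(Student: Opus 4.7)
\emph{Proof plan.} The strategy is to recast the colony dynamics of $\mcd$ as a Galton--Watson branching process and conclude from the classical survival dichotomy after computing the mean offspring. Each colony evolves completely independently until its first collapse, at which instant every surviving individual starts a fresh independent copy of a colony. Letting $Z_{0}=1$ and $Z_{n}$ be the number of colonies in the $n$-th generation of this ancestral tree, $(Z_{n})_{n\ge 0}$ is a Galton--Watson process whose offspring variable $S$ is the number of survivors of a single colony at its collapse. Because each colony has $\mathrm{Exp}(1)$-distributed lifetime and an infinite i.i.d.\ sum of $\mathrm{Exp}(1)$ variables diverges a.s., $\mcd$ survives if and only if this branching process does.

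I would then compute the mean offspring in two steps. First, the colony size $N$ at the moment of its collapse equals $1$ plus the number of births (rate $\lambda$) that win the race against the collapse clock (rate $1$); since the collapse wins each race with probability $1/(1+\lambda)$, one obtains $P(N=k)=(1+\lambda)^{-1}\bigl(\lambda/(1+\lambda)\bigr)^{k-1}$ for $k\ge 1$, and in particular $E[N]=1+\lambda$. Second, I would condition on the type of collapse. For a binomial collapse, $E[S\mid N=i]=pi$, giving $E[S\mid\mathrm{bin}]=p(1+\lambda)$. For a geometric collapse, $P(S=i-k\mid N=i)=pq^{k}$ for $0\le k\le i-1$ and $P(S=0\mid N=i)=q^{i}$, so $E[S\mid N=i,\mathrm{geo}]=p\sum_{k=0}^{i-1}(i-k)q^{k}$; summing against the geometric law of $N$ and exchanging the order of summation turns this double sum into a product of two geometric series, which simplifies to $E[S\mid\mathrm{geo}]=p(1+\lambda)^{2}/(1+\lambda p)$. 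Combining both cases gives
\[
E[S]=r\,\frac{p(1+\lambda)^{2}}{\lambda p+1}+(1-r)\,p(1+\lambda),
\]
which is precisely the left-hand side of \eqref{eqthdisp1}.

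Finally, $S$ is non-degenerate: $P(S=0)\ge q>0$ (a colony of size $N=1$ leaves no survivor with probability $q$ under either effect), and clearly $P(S\ge 2)>0$. The classical Galton--Watson dichotomy then asserts that $(Z_{n})$ survives with positive probability if and only if $E[S]>1$, which is \eqref{eqthdisp1}. The main technical obstacle is the double-series manipulation leading to the closed form for $E[S\mid\mathrm{geo}]$; the branching-process reduction and the final dichotomy are routine.
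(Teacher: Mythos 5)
Your proposal is correct and takes essentially the same approach as the paper: reduce $\mcd$ to a Galton--Watson process whose offspring variable is the number of collapse survivors and apply the classical dichotomy, your $E[S]$ coinciding with the paper's $\mathbb{E}[Z_1^{r,2}]$; the only (minor) difference is that you get the mean by conditioning on the geometric colony size at collapse, while the paper first computes the full offspring distribution and its generating function (which it needs anyway for the extinction-probability theorem). One small slip: $P(S=0)$ equals $q/(1+\lambda p)$, not at least $q$, but only its positivity matters for the non-degeneracy argument.
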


Theorem~\ref{th:disp1} shows that, contrary to what happens in $\mcu$, in $\mcd$ the 
population is able to survive even when the binomial effect may occur $(r<1)$. See
example~\ref{ex:bin}. In particular, if $r=0$ (pure binomial effect)
the process survives with positive probability whenever $p(\lambda+1)>1$.

The next result shows how to compute the probability of extinction, which means, the probability
that eventually the system becomes empty.

\begin{theorem}\label{th:disp2} Let $\rho_2(r)$ be the probability of extinction in $\mcd$. Then $\rho_2(r)$  is the smallest non-negative solution of 
\begin{equation}\label{probext}\phi(s):=\frac{1}{1+\lambda p}\left[q+\frac{r(\lambda +1)ps}{1+\lambda -\lambda s}+\frac{(1-r)(\lambda +1)ps}{1+\lambda p - \lambda p s}\right]=s 
\end{equation}
\end{theorem}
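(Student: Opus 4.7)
The proof plan is to recognize the process $\mcd$ as a Galton-Watson branching process at the level of colonies, and then compute the probability generating function of the offspring distribution explicitly.

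First I would observe that by the definition of $\mcd$, colonies evolve independently: each begins with one individual and gives rise, at its (first) collapse time, to a random number $N$ of offspring colonies, namely the survivors of that collapse, each of which then evolves as a fresh independent copy of the process. Hence the colonies, indexed by their generation, form a Galton–Watson branching process started from a single ancestor, with offspring distribution equal to the law of $N$. The event that $\mcd$ dies out is exactly the event that this GW tree is finite, so by classical branching process theory $\rho_2(r)$ is the smallest non-negative solution of $E[s^N]=s$. It therefore suffices to identify $E[s^N]$ with the function $\phi(s)$ in \eqref{probext}.

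Next I would compute the distribution of $M$, the population size at the moment the colony collapses. Let $T\sim\mathrm{Exp}(1)$ be the collapse time; conditional on $T=t$, the number of births during $[0,t]$ is $\mathrm{Poisson}(\lambda t)$, so $M-1\mid T=t\sim\mathrm{Poisson}(\lambda t)$. Integrating against the density of $T$ yields
\[
P(M=m)=\frac{1}{\lambda+1}\left(\frac{\lambda}{\lambda+1}\right)^{m-1},\qquad m\geq 1,
\]
with generating function $E[s^M]=s/(\lambda+1-\lambda s)$. Then I would condition on which kind of effect strikes. Under the binomial effect, $N\mid M\sim\mathrm{Bin}(M,p)$, so $E[s^N\mid\mathrm{bin}]=E[(q+ps)^M]=(q+ps)/(1+\lambda p-\lambda ps)$. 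Under the geometric effect, using the explicit $\mu^G_{ij}$ gives $E[s^N\mid M=m,\mathrm{geom}]=q^m+p\sum_{j=1}^m q^{m-j}s^j$; swapping the order of summation in $\sum_m P(M=m)\sum_j q^{m-j}s^j$ reduces the inner tail to a geometric series, producing
\[
E[s^N\mid\mathrm{geom}]=\frac{1}{1+\lambda p}\left[q+\frac{(\lambda+1)ps}{1+\lambda-\lambda s}\right].
\]

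Finally I would average with weights $r$ and $1-r$. The geometric piece already has the required form, so the remaining task is to rewrite the binomial piece in the same format; a short algebraic check (multiplying out $(q+ps)(1+\lambda p)$) shows
\[
\frac{q+ps}{1+\lambda p-\lambda ps}=\frac{1}{1+\lambda p}\left[q+\frac{(\lambda+1)ps}{1+\lambda p-\lambda ps}\right],
\]
so that $r\,E[s^N\mid\mathrm{geom}]+(1-r)\,E[s^N\mid\mathrm{bin}]$ coincides term by term with $\phi(s)$. The main obstacle is purely algebraic — correctly handling the double sum in the geometric case and massaging the binomial expression so that the mixture collapses to the single common denominator $1+\lambda p$ — but no new probabilistic idea is needed beyond the GW reduction.
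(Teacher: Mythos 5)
Your proposal is correct and follows essentially the same route as the paper: reduce $\mcd$ to a Galton--Watson process at the colony level and identify $\rho_2(r)$ as the smallest non-negative fixed point of the offspring pgf, which you compute by conditioning on the exponential collapse time and the Poisson growth. The only cosmetic difference is that you first obtain the geometric law of the pre-collapse population $M$ and then use $E[(q+ps)^M]$ (binomial case) and a sum swap (geometric case), whereas the paper computes the offspring distributions $P(Z_B=k)$, $P(Z_G=k)$ term by term before summing; both yield the same $\phi_B$, $\phi_G$ and the same mixture $\phi = r\phi_G+(1-r)\phi_B$.
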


\begin{exa}\label{ex:bin} For $C^2(2/5,r,1)$
\[\phi(s)=\frac{3}{7}+\frac{4rs}{14-7s}+\frac{20(1-r)s}{49-14s}.\]
The smallest non-negative solution for the equation $\phi(s)=s,$ is given by 
\[\rho_2(r)=\left\{\begin{array}{cl}1,& \ r\leq7/12\\ 
\displaystyle\frac{12r+49-\sqrt{144 r^2+1176 r+49}}{28},& \ r>7/12.
\end{array}\right.\]
\end{exa}

\begin{obs} For $r=0$ (pure binomial effect) and  $r=1$ (pure geometric effect) the smallest non-negative solution for (\ref{probext}) is: 
$$\rho_2(0)=\min\left\{\frac{q}{\lambda p},1\right\} \hspace{0.5cm} \text{and}\hspace{0.5cm}\rho_2(1)=\min\left\{\frac{q(\lambda+1)}{\lambda(1+\lambda p)},1\right\}.$$ 

Observe that $\rho_2(0)\geq \rho_2(1)$ where the strict inequality holds provided $(1+\lambda+\lambda^2)^{-1}<p<1.$ Moreover, 
\begin{itemize}
\item[$\bullet$] If $p<\displaystyle\frac{1}{1+\lambda+\lambda^2}$ then $\rho_2(0)=\rho_2(1)=1.$
\item[$\bullet$] If $\displaystyle\frac{1}{1+\lambda+\lambda^2}<p<\frac{1}{1+\lambda}$ then 
$\rho_2(0)=1$ and $\rho_2(1)=\displaystyle\frac{q(\lambda+1)}{\lambda(1+\lambda p)}.$
\item[$\bullet$] If $p>\displaystyle\frac{1}{1+\lambda}$ then $\rho_2(0)=\displaystyle\frac{q}{\lambda p}$ and $\rho_2(1)=\displaystyle\frac{q(\lambda+1)}{\lambda(1+\lambda p)}.$
\end{itemize}

Note that likewise as occurs in $\mcu$, the binomial effect is a worst scenary than the
geometric effect for the population survival in $\mcd$.
\end{obs}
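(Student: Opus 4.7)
The plan is to substitute $r=0$ and $r=1$ separately into the fixed point equation~(\ref{probext}) and solve explicitly. A direct check gives $\phi(1)=(q+(\lambda+1)p)/(1+\lambda p)=1$, so $s=1$ is always a root of $\phi(s)=s$. After clearing denominators one obtains a quadratic in $s$, and factoring out $(s-1)$ isolates the only nontrivial candidate; the smaller of the two nonnegative roots is then $\rho_2(r)$.

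For $r=0$, multiplying through by $(1+\lambda p)(1+\lambda p-\lambda p s)$ and simplifying (using $q=1-p$) reduces the equation to $\lambda p\,s^2-(q+\lambda p)s+q=0$, which factors cleanly as $(s-1)(\lambda p\,s-q)=0$. The second root $q/(\lambda p)$ is positive, and $q/(\lambda p)<1$ iff $p>1/(1+\lambda)$, giving $\rho_2(0)=\min\{q/(\lambda p),\,1\}$. For $r=1$ the same procedure produces a quadratic whose leading coefficient is $\lambda(1+\lambda p)$ and whose constant term is $q(1+\lambda)$; since $s=1$ is a root, Vieta's formulas directly yield the other root as $q(1+\lambda)/[\lambda(1+\lambda p)]$. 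This root is less than $1$ iff $q(1+\lambda)<\lambda(1+\lambda p)$, which rearranges to $p(1+\lambda+\lambda^2)>1$, i.e.\ $p>1/(1+\lambda+\lambda^2)$.

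With these explicit expressions in hand, the three-case classification follows by locating $p$ relative to the two thresholds $1/(1+\lambda+\lambda^2)<1/(1+\lambda)$ (the strict inequality here just says $\lambda^2>0$). The comparison $\rho_2(0)\geq\rho_2(1)$ is immediate in the first two cases (both equal $1$, respectively $\rho_2(0)=1$ while $\rho_2(1)<1$); in the regime $p>1/(1+\lambda)$ where both probabilities lie in $(0,1)$, the ratio $\rho_2(1)/\rho_2(0)=(1+\lambda)p/(1+\lambda p)$ is strictly less than $1$ exactly when $p<1$, which is part of the hypothesis. Thus strict inequality holds throughout $1/(1+\lambda+\lambda^2)<p<1$.

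The main obstacle is purely algebraic bookkeeping: keeping track of the two thresholds and verifying that each nontrivial root lies in $[0,1)$ precisely on the stated interval. No branching-process machinery beyond Theorem~\ref{th:disp2} is required, and the structure of the argument is entirely driven by the fact that $s=1$ is a known root, which reduces each quadratic to a single linear condition.
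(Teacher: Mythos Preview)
Your argument is correct and complete. The paper states this as a remark without supplying a proof, so your explicit algebraic verification---clearing denominators, factoring out the known root $s=1$, reading off the second root (directly for $r=0$, via Vieta for $r=1$), and then comparing against the thresholds $1/(1+\lambda+\lambda^2)$ and $1/(1+\lambda)$---is exactly the computation the reader is implicitly expected to carry out.
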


\begin{obs} Observe that $\rho_1(r) \geq \rho_2(r)$ for $ r \in [0,1].$ In addition, if $r<1$ the inequality is strict
provided (\ref{eqthdisp1}) holds. Moreover,  
$\rho_1(1) > \rho_2(1)$ for $\lambda(1+\lambda p)>q(\lambda+1).$ That means when there
are no spatial restrictions, dispersion is a good strategy for population survival. That coincides
with the results for the models presented and analyzed by Schinazi~\cite{S2014} and Machado \textit{et al.}~\cite{MRS2015}. \end{obs}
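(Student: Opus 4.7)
The plan is to split the argument on whether $r<1$ or $r=1$ and, in each case, reduce the assertions to the explicit formulas already available from Theorem~\ref{th:semdisp} and Theorem~\ref{th:disp1}, combined with elementary arithmetic.

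First I would handle $r<1$. By Theorem~\ref{th:semdisp} we have $\rho_1(r)=1$, and since $\rho_2(r)$ is a probability we have $\rho_2(r)\leq 1=\rho_1(r)$ automatically. The inequality is strict exactly when $\rho_2(r)<1$, which is equivalent to $\mcd$ surviving with positive probability; by Theorem~\ref{th:disp1} this is in turn equivalent to condition (\ref{eqthdisp1}). This disposes of the first two assertions of the remark.

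Next I would treat $r=1$ using the closed forms $\rho_1(1)=\min\{q/(\lambda p),1\}$ and $\rho_2(1)=\min\{q(\lambda+1)/(\lambda(1+\lambda p)),1\}$ already recorded in the preceding remark. The core computation is the identity
\[
\frac{q}{\lambda p}-\frac{q(\lambda+1)}{\lambda(1+\lambda p)} \;=\; \frac{q(1-p)}{\lambda p(1+\lambda p)},
\]
which is strictly positive because $p<1$. Hence the ``interior'' candidate for $\rho_1(1)$ strictly dominates the interior candidate for $\rho_2(1)$. A short case split on which branch of each minimum is active then yields $\rho_1(1)\geq\rho_2(1)$ and shows that strictness is equivalent to $\rho_2(1)<1$, i.e.\ to $\lambda(1+\lambda p)>q(\lambda+1)$. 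The only subtlety is verifying that if $q<\lambda p$ then the stated condition is automatic, which follows from the chain $\lambda(1+\lambda p)>\lambda p(\lambda+1)>q(\lambda+1)$, the first inequality coming from $p<1$ and the second from $q<\lambda p$.

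There is no serious technical obstacle here; the main thing to watch is which branch of each $\min$ is active. Establishing the interior inequality first and then checking the boundary subcases keeps the bookkeeping clean, and the closing sentence of the remark (dispersion as a good survival strategy in the absence of spatial restrictions) is simply the verbal restatement of $\rho_1>\rho_2$ under the stated conditions.
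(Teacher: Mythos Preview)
Your proposal is correct and matches the reasoning the paper relies on. In the paper this statement is left as a remark without a standalone proof; it is meant to follow directly from the explicit formulas in Theorem~\ref{th:semdisp}, Theorem~\ref{th:disp1}, and the preceding remark giving $\rho_2(0)$ and $\rho_2(1)$, which is exactly the route you take.
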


\subsection{Growth   with dispersion and spatial restriction.} 

 
Let $\mathcal{G}_m$ be a graph (finite or infinite) such that every vertex has $m$ neighbours, what is known as a $m-$regular graph. Let us define a process with 
dispersion and spatial restrictions on $\mathcal{G}_m$, starting from a single colony placed at one 
vertex of $\mathcal{G}_m$, with just one individual. The 
number of individuals in a colony grows following a Poisson process of rate $\lambda>0$. To each colony 
we associate an exponential time of mean 1 that indicates when the colony collapses. Each one of the individuals that survived the collapse (either a binomial or a geometric effect)
picks randomly a neighbor vertex and tries to create a new colony at it. Among the survivors leaping 
to the same vertex trying to create a new colony at it, only one succeeds (disregarding the number of
colonies already present at that vertex), the others die. So in this case  when a colony collapses, 
it is replaced by 0,1, ... or $ m $ colonies. Finaly, every vertex can have any number of independent
colonies. We denote this process by $\mct$.

The next result presents a necessary and sufficient condition for population survival in
$\mct$. 

\begin{theorem} \label{th:dispesp1}
The process $\mct$ survives with positive probability if and only if $$\frac{mp(1+\lambda)^2r}{(m+\lambda)(\lambda p +1)}+\frac{mp(1+\lambda)(1-r)}{m+ \lambda  p} > 1.$$
\end{theorem}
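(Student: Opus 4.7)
The plan is to view $\mct$ as a (discrete-generation) Galton--Watson branching process whose particles are the colonies. Because every vertex of $\mathcal{G}_m$ may host arbitrarily many independent colonies, and each colony evolves (growth and collapse) independently of all others, the colonies form a Galton--Watson process whose offspring variable $N$ is the number of distinct neighbors chosen by the survivors of a single collapse. Since $\bbP(N=0) > 0$ (even a geometric-effect collapse of a size-one colony leaves no survivor with positive probability), $N \not\equiv 1$, and the classical Galton--Watson dichotomy reduces the theorem to showing that $\bbE[N] > 1$ is equivalent to the displayed inequality.

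To compute $\bbE[N]$, write $I$ for the size of the colony just before its collapse and $S$ for the number of survivors. Racing the Exp$(1)$ collapse clock against Poisson$(\lambda)$ births gives that $I$ is geometric on $\{1,2,\ldots\}$ with $\bbP(I=i) = (\lambda+1)^{-1}(\lambda/(\lambda+1))^{i-1}$. Conditional on $S$, a standard balls-into-bins identity yields $\bbE[N \mid S] = m(1-(1-1/m)^S)$, so
\[
\bbE[N] \;=\; m - m\,\bbE[\alpha^S], \qquad \alpha := \tfrac{m-1}{m}.
\]

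It then remains to evaluate $\bbE[\alpha^S]$ for each effect and mix with weights $1-r$ and $r$. For the binomial effect, $\bbE[\alpha^S\mid I] = (q+p\alpha)^I$; summing the resulting geometric series in $I$ and substituting $\alpha = (m-1)/m$ contributes $mp(1+\lambda)/(m + \lambda p)$ to $\bbE[N]$. For the geometric effect, the definition of $\mu_{ij}^G$ gives
\[
\bbE[\alpha^S \mid I=i] \;=\; q^i + \frac{p\alpha(\alpha^i - q^i)}{\alpha - q},
\]
whose unconditional expectation is obtained by combining two geometric sums; after substituting $\alpha = (m-1)/m$, the factor $\alpha - q = (mp-1)/m$ cancels cleanly, leaving a contribution of $mp(1+\lambda)^2/((m+\lambda)(1+\lambda p))$ to $\bbE[N]$. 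Adding these contributions with their respective weights reproduces exactly the left-hand side of the stated inequality.

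The conceptual content is entirely the branching-process identification; the only real technical obstacle is the algebraic bookkeeping in the geometric-effect step, where one must combine the two geometric sums in $i$ and verify the non-obvious cancellation of $\alpha - q$ before the expression collapses into the clean closed form appearing in the theorem.
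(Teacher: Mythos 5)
Your proposal is correct and follows essentially the same route as the paper: the colonies form a Galton--Watson process (Remark~\ref{AuxPro}), survival with positive probability is equivalent to mean offspring exceeding one, and the mean is computed exactly as in Lemma~\ref{L: disp2} via $\mathbb{E}[N]=m\bigl(1-\mathbb{E}[\alpha^{S}]\bigr)$ with $\alpha=(m-1)/m$; the only cosmetic difference is that you evaluate $\mathbb{E}[\alpha^{S}]$ by conditioning on the geometric pre-collapse colony size $I$, whereas the paper first integrates out the collapse time to get the survivor distributions $Z_B,Z_G$ of Lemma~\ref{L:disp} and then sums. One small point: your closed form for $\mathbb{E}[\alpha^{S}\mid I=i]$ under the geometric effect divides by $\alpha-q=(mp-1)/m$, so the degenerate case $mp=1$ needs a one-line separate treatment (direct summation or continuity), after which the final expression, and hence the stated criterion, is unchanged.
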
 
The following result shows that the extinction probability for the process $\mct$
can be computed as the root of a polynomial of degree $m$.


\begin{theorem}\label{th:dispesp2} Let $\rho_3(r)$ be the probability of population extinction in $\mct$. Then  $\rho_3(r)$  is the smallest non-negative solution of $$\psi(s):=r\psi _G(s)+(1-r)\psi _B(s)=s,$$ where
{\small  
\[ \psi _B(s):=\frac{q}{1+\lambda p}+\frac{m(1+\lambda)}{\lambda}\sum_{k=1}^m  {m \choose k}\left[\frac{-\lambda p s}{m(1+\lambda p)}\right]^k\sum_{j=0}^k {k \choose j}\frac{(-1)^j j^k}{m(1+\lambda p)-\lambda p j}, \]}  
{\small 
\[ \psi_G(s):=\frac{q}{1+\lambda p}+\frac{(1+\lambda)ps}{\lambda p +1}\sum_{k=1}^m  {m \choose k}\left[\frac{-\lambda  s}{m(1+\lambda )}\right]^{k-1}\sum_{j=0}^k {k \choose j}\frac{(-1)^{j-1}j^k}{m(1+\lambda )-\lambda j}.\]}  
\end{theorem}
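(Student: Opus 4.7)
The plan is to recognize $\mct$ as a Galton--Watson branching process of colonies. Since colonies evolve, collapse, and disperse independently of one another, and the $m$-regular structure of $\mathcal{G}_m$ makes the offspring law identical at every vertex, the genealogy of the colony population forms a classical Galton--Watson tree. The extinction probability $\rho_{3}(r)$ is therefore the smallest non-negative fixed point of the offspring pgf $\psi(s)=\mathbb{E}[s^{Z}]$, where $Z$ counts the new colonies created when a single colony collapses. Conditioning on which kind of effect strikes gives $\psi=r\psi_{G}+(1-r)\psi_{B}$, so it suffices to compute $\psi_{G}$ and $\psi_{B}$ separately.

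To compute them I would decompose $Z$ into three layers: the colony size $N$ at the collapse instant, the number $S$ of survivors, and the number $Z$ of distinct neighbors that the survivors settle at. Because births (rate $\lambda$) race against a single collapse clock (rate $1$), $N$ is geometric on $\{1,2,\dots\}$ with success probability $1/(1+\lambda)$. A direct pgf computation then yields $\mathbb{E}[u^{S}]=(q+pu)/(1+\lambda p-\lambda p u)$ in the binomial case and $\mathbb{E}[u^{S}]=q/(1+\lambda p)+(1+\lambda)pu/\bigl[(1+\lambda p)(1+\lambda-\lambda u)\bigr]$ in the geometric case.

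Given $S$ survivors, $Z$ equals the number of non-empty boxes obtained when $S$ labelled balls are thrown independently and uniformly into $m$ boxes. Inclusion-exclusion on the empty boxes gives
\[
\mathbb{E}[s^{Z}\mid S]=\sum_{k=0}^{m}\binom{m}{k}(-s)^{k}\sum_{i=0}^{k}\binom{k}{i}(-1)^{i}(i/m)^{S},
\]
and taking expectation over $S$ replaces $(i/m)^{S}$ by $\mathbb{E}[u^{S}]$ evaluated at $u=i/m$. Substituting the expressions for $\mathbb{E}[u^{S}]$ and applying a partial-fraction decomposition of $(mq+pi)/(m(1+\lambda p)-\lambda p i)$, respectively $i/(m(1+\lambda)-\lambda i)$, lets the constant summand vanish for $k\ge 1$ (because $\sum_{i=0}^{k}(-1)^{i}\binom{k}{i}=0$), leaving a sum of the form $\sum_{i=0}^{k}\binom{k}{i}(-1)^{i}/(A-Bi)$, with $(A,B)=(m(1+\lambda p),\lambda p)$ for the binomial case and $(A,B)=(m(1+\lambda),\lambda)$ for the geometric case.

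The main obstacle is cosmetic: the theorem's formulas carry an extra factor $j^{k}$ in the numerator. The bridge is the algebraic identity
\[
\sum_{i=0}^{k}\binom{k}{i}\frac{(-1)^{i}}{A-Bi}=\left(\frac{B}{A}\right)^{k}\sum_{j=0}^{k}\binom{k}{j}\frac{(-1)^{j}j^{k}}{A-Bj},
\]
which I would establish by expanding each $1/(A-Bi)$ as $A^{-1}\sum_{n\ge 0}(Bi/A)^{n}$ and invoking the classical Stirling-number identity $\sum_{i=0}^{k}(-1)^{k-i}\binom{k}{i}i^{n}=k!\,S(n,k)$, which vanishes for $n<k$ and produces the needed index shift. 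Inserting this identity and collecting prefactors gives $\psi_{B}(s)$ in the stated form; for $\psi_{G}(s)$ one additionally pulls the factor $(1+\lambda)ps/(1+\lambda p)$ out of the sum and absorbs one power of $-\lambda s/[m(1+\lambda)]$ into it, producing the exponent $k-1$ and the sign $(-1)^{j-1}$ that appear in the statement.
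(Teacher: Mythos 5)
Your proposal is correct, and its skeleton (reduction to a Galton--Watson process of colonies, conditioning on the number of collapse survivors, inclusion--exclusion over the $m$ neighbouring vertices) matches the paper's. The execution differs in one substantive way: the paper computes the offspring \emph{probability mass function} $\mathbb{P}[Z_1^{r,3}=k]$ directly, writing $\mathbb{P}[Z_1^{r,3}=k\mid Z=j]=\binom{m}{k}T(j,k)/m^j$ with $T(j,k)$ the number of surjections, and then sums the geometric tails of $Z_B,Z_G$ term by term; the $j^k$ numerators in $\psi_B,\psi_G$ fall out of that summation automatically. You instead work at the level of generating functions throughout: your survivor pgfs $(q+pu)/(1+\lambda p-\lambda pu)$ and $q/(1+\lambda p)+(1+\lambda)pu/[(1+\lambda p)(1+\lambda-\lambda u)]$ coincide with the paper's $\phi_B,\phi_G$, your occupancy formula $\mathbb{E}[s^Z\mid S]=\sum_{k=0}^m\binom{m}{k}(-s)^k\sum_{i=0}^k\binom{k}{i}(-1)^i(i/m)^S$ is the surjection count in disguise, and after partial fractions you reach the cleaner intermediate form with inner sums $\sum_{i=0}^k\binom{k}{i}(-1)^i/(A-Bi)$. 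To land on the statement as written you then genuinely need your bridge identity
\begin{equation*}
\sum_{i=0}^{k}\binom{k}{i}\frac{(-1)^{i}}{A-Bi}=\Bigl(\frac{B}{A}\Bigr)^{k}\sum_{j=0}^{k}\binom{k}{j}\frac{(-1)^{j}j^{k}}{A-Bj},
\end{equation*}
and your proof of it is sound: expand $1/(A-Bi)=A^{-1}\sum_{n\ge0}(Bi/A)^n$ (legitimate here since $Bk/A\le \lambda p/(1+\lambda p)<1$, resp.\ $\lambda/(1+\lambda)<1$ — worth stating), use that $\sum_i(-1)^i\binom{k}{i}i^n=0$ for $n<k$, and resum; the same manipulation handles the geometric case after decomposing $i/(A-Bi)$, which produces exactly the exponent $k-1$, the prefactor $s$, and the sign $(-1)^{j-1}$ of $\psi_G$. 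So your route buys a more conceptual pgf-level computation (and an arguably simpler equivalent expression for $\psi_B,\psi_G$) at the cost of one extra combinatorial identity, while the paper's pmf-plus-surjection computation reaches the stated formulas directly; both are complete proofs of the theorem.
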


\begin{exa} Consider $C^3(2/3,r,1,3).$ Then
$$\psi(s)=\left(\frac{126 r}{3575}+\frac{32}{715}\right)s^3
+\left(\frac{138r}{3575}+\frac{144}{715}\right)s^2
+\left(\frac{36}{65}-\frac{24r}{325}\right)s
+\frac{1}{5}.$$
Therefore, the smallest non-negative solution for $\psi(s)=s$ is given by
$$\rho_3(r)=\frac{-440-132 r+\sqrt{22(14000+9375 r+792 r^2)}}{2 (80+63 r)}.$$ 
\end{exa}

\section{Dispersion as a survival strategy}

Towards being able to evaluate dispersion as a survival strategy we define
$$\li:=\inf\{\lambda:  \mathbb{P}[ \mci \text{ survives}]>0 \}, \quad \text{for } i=1,2$$ 
$$\text{and} \quad \lt:=\inf\{\lambda:  \mathbb{P}[ \mct \text{ survives}]>0 \}. $$

Observe that for $i=1,2$, when $0<\li<\infty$ for $0<p<1,$ the graph 
of $\li$ splits the parametric space $\lambda \times p$ into two regions. For those values 
of $(\lambda,p)$ above the curve $\li$  there is survival in $\mci$ with positive probability,
and for those values of $(\lambda,p)$  below the curve  $\li$ extinction occurs in $\mci$ 
with probability 1. The analogous happens also for i=3 and any $m$.

Next we establish some properties of $\ld$ and $\lt.$  
\begin{prop}\label{prop-disp-est} Let $0\leq r \leq 1$ and $0<p<1.$ Then,
\begin{itemize}
\item[$(i)$] $0 < \ld < \lambda^3(p,r,m+1) < \lt < \infty,$ for all $ m\geq 2.$ Besides $\lambda^3(p,r,1)=\infty.$
\item[$(ii)$] $\displaystyle\lim_{m\rightarrow\infty}\lt=\ld.$
\end{itemize} 
\end{prop}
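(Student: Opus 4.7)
The strategy is to recognize that both critical parameters are roots of the explicit thresholds provided by Theorems~\ref{th:disp1} and \ref{th:dispesp1}, and then exploit monotonicity in both $\lambda$ and $m$. Define
\[
f_m(\lambda) := \frac{mp(1+\lambda)^2 r}{(m+\lambda)(\lambda p +1)}+\frac{mp(1+\lambda)(1-r)}{m+ \lambda p}, \quad f_\infty(\lambda) := \frac{p(\lambda+1)^2r}{\lambda p+1} +p(\lambda+1)(1-r).
\]
The first step is to check that both $f_m$ and $f_\infty$ are continuous and strictly increasing on $[0,\infty)$, with $f_m(0)=f_\infty(0)=p<1$; then $\ld$ (respectively $\lt$) is the unique solution in $(0,\infty)$ of $f_\infty(\lambda)=1$ (respectively $f_m(\lambda)=1$), or $+\infty$ if no such solution exists. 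Strict monotonicity in $\lambda$ follows from a direct derivative computation, e.g.\ $\frac{d}{d\lambda}\frac{(\lambda+1)^2}{\lambda p+1}=\frac{(\lambda+1)(\lambda p+2-p)}{(\lambda p+1)^2}>0$ since $p<1$.

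For part $(i)$, I would use the elementary fact that $m/(m+x)$ is strictly increasing in $m$ for any $x>0$ and tends to $1$ as $m\to\infty$. Applied with $x=\lambda$ and $x=\lambda p$, this yields $f_m(\lambda)<f_{m+1}(\lambda)<f_\infty(\lambda)$ for every $\lambda>0$, and $f_m(\lambda)\to f_\infty(\lambda)$ pointwise. Combined with strict monotonicity in $\lambda$, this produces the chain $\ld<\lambda^3(p,r,m+1)<\lt$: evaluating $f_{m+1}$ at $\ld$ gives $f_{m+1}(\ld)<f_\infty(\ld)=1$, forcing $\lambda^3(p,r,m+1)>\ld$, and $f_m(\lambda^3(p,r,m+1))<f_{m+1}(\lambda^3(p,r,m+1))=1$ forces $\lt>\lambda^3(p,r,m+1)$. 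Positivity $\ld>0$ is immediate from $f_\infty(0)=p<1$ and continuity. Finiteness $\lt<\infty$ for $m\geq 2$ follows from $\lim_{\lambda\to\infty} f_m(\lambda)=mr+m(1-r)=m>1$. The exceptional case $m=1$ is handled by direct simplification: the common factor $(1+\lambda)$ cancels in the first summand and one obtains $f_1(\lambda)=p(1+\lambda)/(1+\lambda p)$, which is strictly less than $1$ for every $\lambda\geq 0$ since $p<1$; no supercritical regime exists and $\lambda^3(p,r,1)=\infty$.

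For part $(ii)$, part $(i)$ shows that the sequence $\{\lt\}_{m\geq 2}$ is strictly decreasing and bounded below by $\ld$, so it admits a limit $L\geq \ld$. To rule out $L>\ld$, I would argue by contradiction: choose any $\lambda^*\in(\ld,L)$. Then $f_\infty(\lambda^*)>1$, and pointwise convergence $f_m(\lambda^*)\to f_\infty(\lambda^*)$ yields $f_m(\lambda^*)>1$ for every sufficiently large $m$, whence $\lt\leq\lambda^*<L$, contradicting $L=\lim\lt$. Hence $L=\ld$.

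The whole argument is essentially elementary calculus applied to the two explicit thresholds; the one point that requires a moment's care is verifying strict monotonicity of $f_\infty$ and $f_m$ in $\lambda$ so that the thresholds are genuinely unique, but this does not constitute a real obstacle.
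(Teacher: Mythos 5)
Your argument is correct, and for part $(i)$ it follows essentially the same route as the paper: the same threshold functions $f_m$ and $f$ (your $f_\infty$), the same facts $f_m(0)=f(0)=p<1$, $\lim_{\lambda\to\infty}f_m(\lambda)=m$, strict monotonicity in $\lambda$, and strict monotonicity in $m$ with $f_m\uparrow f$, giving unique roots that are ordered; your explicit simplification $f_1(\lambda)=p(1+\lambda)/(1+\lambda p)<1$ is a slightly more concrete way to get $\lambda^3(p,r,1)=\infty$ than the paper's appeal to the monotone limit of $f_1$. Where you genuinely diverge is part $(ii)$: the paper restricts to the compact interval $[0,\lambda^3(p,r,2)]$, invokes Dini's theorem (Rudin, Theorem 7.13) to upgrade the monotone pointwise convergence $f_m\to f$ to uniform convergence, and then passes to the limit in $f_m(\lt)=1$ to conclude $f(\theta)=1$; you instead use only pointwise convergence together with the fact that $\{\lt\}_m$ is decreasing and bounded below by $\ld$, and rule out a limit $L>\ld$ by evaluating at an intermediate point $\lambda^*\in(\ld,L)$ where $f(\lambda^*)>1$ forces $f_m(\lambda^*)>1$, hence $\lt\leq\lambda^*<L$, a contradiction. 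Your version is more elementary (no uniform convergence needed) and exploits the monotonicity of the limit function $f$, which the paper has anyway; the paper's uniform-convergence argument is slightly more robust in that it does not rely on strict monotonicity of $f$ near the limit point, but for these explicit functions both routes are sound. The only small items you assert without full verification are the strict monotonicity in $\lambda$ of $f_m$ (you only differentiate the component of $f_\infty$), but this is a routine computation at the same level of detail the paper itself leaves implicit.
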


\begin{obs} From standard coupling arguments one can show the expected monotonocity relationship.

\noindent
If $p_1 > p_2$ then
\begin{align*}
\lambda^i(p_1, r) & \leq \lambda^i(p_2, r), \ i=1,2 \\
\lambda^3(p_1, r,m) & \leq \lambda^3(p_2, r,m).
\end{align*}
If $r_1 > r_2$ then
\begin{align*}
\lambda^i(p, r_1) & \leq \lambda^i(p, r_2), \ i=1,2 \\
\lambda^3(p, r_1,m) & \leq \lambda^3(p, r_2,m).
\end{align*}
\end{obs}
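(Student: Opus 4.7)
The plan is to build, for each assertion, a coupling that realises the two processes on a common probability space so that the copy with the larger parameter pointwise dominates the other (in colony sizes and, for $\mct$, in the set of occupied sites). The ordering of the critical $\lambda$'s then follows at once from the definition of $\lambda^i$.

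First, for fixed $r$ and $p_1>p_2$, I would share across the two copies every Poisson birth clock (rate $\lambda$) and every exponential collapse clock (rate $1$), since these are parameter-free. At each collapse of a colony of size $i$, attach a common $V\sim U[0,1]$ selecting the catastrophe type (geometric if $V<r$, binomial otherwise) together with i.i.d.\ $U_1,\dots,U_i\sim U[0,1]$ indexed by the individuals. For the binomial effect, individual $j$ is declared to survive in the $p_\ell$-copy iff $U_j<p_\ell$; for the geometric effect, the survivors in the $p_\ell$-copy are those with index $\geq K_\ell:=\min\{j:U_j<p_\ell\}$ (none if no such $j$ exists). Since $p_1>p_2$, we have $\{U_j<p_2\}\subseteq\{U_j<p_1\}$ and $K_1\le K_2$, so in both regimes the $p_2$-survivor set is contained in the $p_1$-survivor set. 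An induction on the sequence of collapse events propagates the inclusion and yields pointwise dominance of the $p_1$-copy over the $p_2$-copy; in particular survival of the latter implies survival of the former, whence $\lambda^i(p_1,r)\le \lambda^i(p_2,r)$ for $i=1,2$.

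For fixed $p$ and $r_1>r_2$ I keep the same birth/collapse clocks, the same $U_j$'s and the same $V$, changing only the interpretation of $V$: the $r_\ell$-copy uses geometric when $V<r_\ell$. On $\{V<r_2\}\cup\{V\ge r_1\}$ both copies run the same mechanism and produce the same survivor set. On $\{r_2\le V<r_1\}$ the $r_1$-copy is geometric with survivor set $\{K,K+1,\dots,i\}$ where $K=\min\{j:U_j<p\}$, while the $r_2$-copy is binomial with survivor set $\{j:U_j<p\}$; every binomial survivor $j$ has $U_j<p$ and therefore $j\ge K$, so the binomial set is contained in the geometric set. Thus the $r_2$-survivors are always contained in the $r_1$-survivors, and the same induction yields $\lambda^i(p,r_1)\le \lambda^i(p,r_2)$ for $i=1,2$.

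To extend both couplings to $\mct$, I attach in addition a common uniform neighbour-label $W_j\in\{1,\dots,m\}$ to each individual of each collapsing colony. Since the dominated copy's survivors are contained in those of the dominating copy and pick the same target via $W_j$, the multiset of attempted destinations in the dominated copy is contained in that of the dominating copy; a vertex receives a new colony precisely when at least one attempt lands there, so the set of newly occupied vertices in the dominated copy is contained in that of the dominating copy. Iterating over all collapse events gives dominance on the occupied-vertex process, hence $\lambda^3(p_1,r,m)\le \lambda^3(p_2,r,m)$ and $\lambda^3(p,r_1,m)\le \lambda^3(p,r_2,m)$. The only genuinely nontrivial step is the catastrophe-level coupling: using the \emph{same} ordered sequence $U_1,\dots,U_i$ for both the binomial and the geometric mechanism is what forces the inclusion $\{j:U_j<p\}\subseteq\{K,K+1,\dots,i\}$; every other step is routine monotone-coupling bookkeeping.
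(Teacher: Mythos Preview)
Your coupling is correct and is precisely the ``standard coupling argument'' that the paper invokes but never writes down; the remark is stated without proof in the paper, so you have supplied strictly more than the authors do. The clever point you identify --- using the \emph{same} ordered uniforms $U_1,\dots,U_i$ to drive both the binomial and the geometric mechanism, so that $\{j:U_j<p\}\subseteq\{K,K+1,\dots,i\}$ --- is exactly what makes the $r$--monotonicity go through.

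One small wrinkle worth tightening: for $i=2,3$ corresponding colonies in the two copies share their birth and collapse clocks and start with one individual, so they always have the \emph{same} size at collapse time and your phrase ``a colony of size $i$'' is unambiguous. For $i=1$, however, after the first collapse the two copies generally have different sizes $i_1\ge i_2$, so you should either (a) attach an infinite i.i.d.\ sequence $U_1,U_2,\dots$ at each collapse and let the copy of size $i_\ell$ read the first $i_\ell$ coordinates, or (b) keep persistent individual labels and attach each $U$ to a labelled individual. With either convention the argument you give still yields $K_1\le K_2$ (since $\{j\le i_2:U_j<p_2\}\subseteq\{j\le i_1:U_j<p_1\}$) and hence size dominance, but as written your sentence ``a colony of size $i$'' hides this bookkeeping. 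This is cosmetic; the mathematics is sound.
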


For what follows $0<p<1.$ From Theorem \ref{th:semdisp} it follows that if $r<1$ then $\lambda^1(p,r)=\infty,$ and from Proposition \ref{prop-disp-est} we obtain that $$\ld<\lt<\lambda^1(p,r),$$ for all $m\geq2$. Then, 
provided binomial effect may strike $(r<1)$, dispersion is a good  scenary for 
population survival either with or without spatial restrictions.  \\

When binomial effect is not present $(r=1)$, which means, only  geometric effect is present,  it
is simple to compute $\lambda^1(p,1),$ $\lambda^2(p,1)$ and $\lambda^3(p,1,m)$. From Theorems \ref{th:semdisp}, \ref{th:disp1} and \ref{th:dispesp1},   we have that 

%
%

\begin{eqnarray*}
\lambda^1(p,1)&=&\frac{1-p}{p},\\
\lambda^2(p,1)&=&\sqrt{\frac{1}{4}+\frac{1-p}{p}} -\frac{1}{2},\\
\lambda^3(p,1,m)&=&\frac{1-mp+\sqrt{(1-mp)^2+4m(m-1)p(1-p)}}{2p(m-1)}.
\end{eqnarray*}

\noindent 
When $r=1$ (pure geometric effect) $\lambda^2(p,1) < \lambda^1(p,1).$ However, dispersion is not always a better scenary for
population survival, as one can see in Figure~\ref{fig:sub1}. Observe that \[\lambda^3(p,1,m)\leq\lambda^1(p,1) \iff p\leq 1-\frac{1}{m-1}.\]
Therefore, under a pure geometric effect, dispersion is an advantage or not for population 
survival depending on both $m$, the spatial restrictions, and $p$, the probability that an individual, 
when exposed to catastrophe, survives. See Figure \ref{fig:sub2}.  

\begin{figure}[h!]
\centering
 \includegraphics[width=\textwidth]{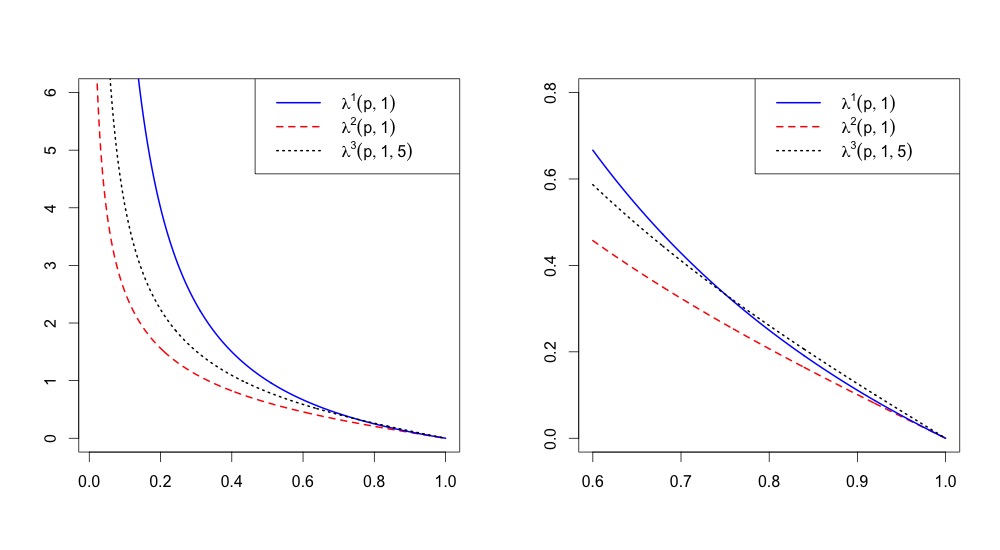}
 \caption{Graphics of $\lambda^1(p,1),\lambda^2(p,1), \lambda^3(p,1,5)$ }
 \label{fig:sub1}
\end{figure}

\begin{figure}[h!]
\centering 
 \includegraphics[scale=0.60]{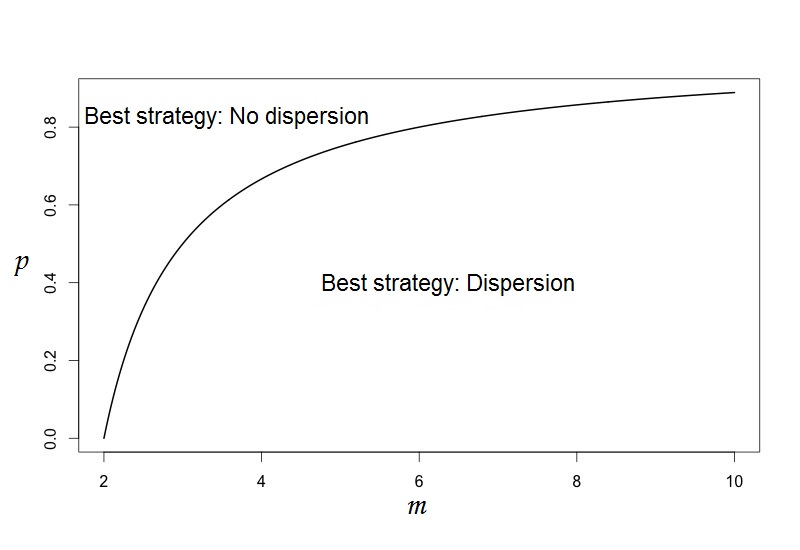}
 \caption{Curve $p=1-(m-1)^{-1}.$ Best strategy for survival, when r=1, provided the spatial restrictions $(m)$
and the probability that an individual survives when facing a collapse $(p)$.}
 \label{fig:sub2}
\end{figure}

\section{Proofs}

Theorem~\ref{th:semdisp} is part of Theorem 3.1 and Theorem 3.2 in Artalejo 
\textit{et al.}~\cite{AEL2007}. They work hard with the moment generating functions of the
first excursion until 0 (the empty state) when the process (binomial and geometric catastrophes)
starts from 1 individual. Here we 
present an alternative proof for $r<1$ by the use of Foster's theorem, enunciated next. For a proof of Foster's theorem see Fayolle~\textit{et. al. }\cite[Theorem 2.2.3]{FMM1995}. 
 
\begin{theorem}[Foster's theorem] Let $\{W_n\}_{n\geq 0}$  be an irreducible and aperiodic Markov chain on countable state space $\mathcal{A}=\{\alpha_i,\ i\geq0\}.$ Then, $\{W_n\}_{n\geq 0}$ is ergodic if and only if there exists a positive function $f(\alpha), \ \alpha\in\mathcal{A},$ a number $\epsilon>0$ and a finite set $A\subset\mathcal{A}$ such that 
$$\mathbb{E}[f(W_{n+1})-f(W_{n}) \ | \ W_n=\alpha_j]\leq -\epsilon, \quad \alpha_j\notin A,$$ 
$$\mathbb{E}[f(W_{n+1}) \ | \ W_n=\alpha_i] < \infty, \quad \alpha_i\in A.$$
\end{theorem}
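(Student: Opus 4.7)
The plan is to prove both implications of Foster's theorem by standard martingale and Markov chain techniques. Since the paper ultimately cites Fayolle \textit{et al.} for this result, the goal is to sketch the now-classical argument in a form suitable for the reader.

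For the \emph{sufficient} direction, suppose $f$, $\epsilon$ and the finite set $A$ exist as in the statement. I would introduce the stopping time $\tau_A := \inf\{n \geq 1 : W_n \in A\}$ and check that $M_n := f(W_{n \wedge \tau_A}) + \epsilon (n \wedge \tau_A)$ is a non-negative supermartingale: the drift inequality gives $\mathbb{E}[M_{n+1} - M_n \mid \mathcal{F}_n] \leq 0$ on $\{\tau_A > n\}$, and trivially on $\{\tau_A \leq n\}$. Taking expectations yields $\epsilon \, \mathbb{E}[n \wedge \tau_A \mid W_0 = \alpha] \leq f(\alpha)$, and monotone convergence as $n \to \infty$ gives $\mathbb{E}[\tau_A \mid W_0 = \alpha] \leq f(\alpha)/\epsilon$ for every $\alpha \notin A$. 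For $\alpha \in A$, the second hypothesis bounds $\mathbb{E}[f(W_1) \mid W_0 = \alpha]$, and a one-step decomposition together with the previous estimate shows that the expected first return time to $A$ from any state in $A$ is finite. Since $A$ is finite and the chain is irreducible, a pigeonhole argument produces some $\alpha^* \in A$ whose expected return time is finite; positive recurrence of $\alpha^*$, combined with irreducibility and aperiodicity, is equivalent to ergodicity of $\{W_n\}$.

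For the \emph{necessary} direction, assume the chain is ergodic. I would fix any state $\alpha_0 \in \mathcal{A}$ and set $A = \{\alpha_0\}$, $\epsilon = 1$, and $f(\alpha) := \mathbb{E}[\tau_A \mid W_0 = \alpha]$, which is positive (taking $f(\alpha_0)$ as any positive number, or replacing $A$ by a slightly larger finite set). Ergodicity yields $f(\alpha) < \infty$ for every $\alpha$, since the expected return time to $\alpha_0$ is finite and hitting times from arbitrary states are dominated via irreducibility and the strong Markov property. Applying the strong Markov property at time $1$ gives $\mathbb{E}[f(W_1) \mid W_0 = \alpha] = f(\alpha) - 1$ for $\alpha \notin A$, which is exactly the required drift with $\epsilon = 1$; finiteness of $\mathbb{E}[f(W_1) \mid W_0 = \alpha_0]$ follows from finiteness of $\mathbb{E}_{\alpha_0}[\tau_A^+]$.

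The main technical obstacle is the optional-stopping step, because $\tau_A$ need not be bounded and $f$ may be unbounded on $\mathcal{A}$, so the elementary optional stopping theorem does not apply directly. The standard fix, as above, is to work with the bounded stopping times $n \wedge \tau_A$ and extract the limit from the supermartingale property via monotone convergence on the positive term $\epsilon(n \wedge \tau_A)$. A secondary subtlety in the necessity direction is verifying $f(\alpha) < \infty$ in a uniform enough way to make the one-step identity rigorous; this reduces to an excursion decomposition based at $\alpha_0$, after which the remaining computations are routine.
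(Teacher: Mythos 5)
There is no in-paper proof to compare against: the paper quotes this statement verbatim from Fayolle \textit{et al.}~\cite{FMM1995} and uses it as a black box, so what you are supplying is the standard textbook argument. Your sufficiency half is essentially sound: the truncated process $M_n=f(W_{n\wedge\tau_A})+\epsilon(n\wedge\tau_A)$ is a nonnegative supermartingale when started outside $A$, and monotone convergence gives $\mathbb{E}_\alpha[\tau_A]\le f(\alpha)/\epsilon$ for $\alpha\notin A$, with the one-step decomposition extending finiteness of $\mathbb{E}_\alpha[\tau_A]$ to $\alpha\in A$. One step is looser than you make it sound: getting from ``the expected return time to the finite set $A$ is finite from every state of $A$'' to positive recurrence of a single state is not a pigeonhole argument. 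The standard route is to consider the chain watched on $A$ (well defined because the chain returns to $A$ almost surely), note that a Markov chain on a finite set has a positive recurrent state $\alpha^*$, and then bound the return time to $\alpha^*$ by a Wald-type estimate: the number of visits to $A$ between successive visits to $\alpha^*$ has finite mean, and each excursion length has mean bounded by $\max_{\alpha\in A}\mathbb{E}_\alpha[\tau_A]<\infty$.

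The necessity direction as written contains a genuine slip. With $A=\{\alpha_0\}$, $\tau_A=\inf\{n\ge 1: W_n\in A\}$ and $f(\alpha)=\mathbb{E}_\alpha[\tau_A]$, the first-step decomposition gives, for $\alpha\notin A$, $\mathbb{E}[f(W_1)\mid W_0=\alpha]=f(\alpha)-1+P(\alpha,\alpha_0)\,f(\alpha_0)$, not $f(\alpha)-1$: the extra term appears because $f(\alpha_0)$ is the mean \emph{return} time, which is typically larger than $1$, so the claimed drift bound can fail. Nor does ``taking $f(\alpha_0)$ as any positive number'' repair this: with $f(\alpha_0)=c$ the drift off $A$ is only bounded by $c-1$, so you would need $c<1$. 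The clean fix is to build $f$ from the \emph{hitting} time $T_A=\inf\{n\ge 0: W_n\in A\}$, which vanishes on $A$, and set $f(\alpha)=\mathbb{E}_\alpha[T_A]+\delta$ for some fixed $\delta>0$; the additive constant restores strict positivity and cancels in the drift, yielding exactly $\mathbb{E}[f(W_1)\mid W_0=\alpha]=f(\alpha)-1$ for $\alpha\notin A$, while $\mathbb{E}[f(W_1)\mid W_0=\alpha_0]<\infty$ is precisely finiteness of the mean return time to $\alpha_0$, which ergodicity supplies (and finiteness of $\mathbb{E}_\alpha[T_A]$ for all $\alpha$ follows, as you say, from irreducibility and the strong Markov property). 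With these two repairs your sketch is the standard, correct proof of the quoted theorem.
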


Next we present the proof of Theorem~\ref{th:semdisp}.

\begin{proof}[Proof of Theorem \ref{th:semdisp}]

Let $\{Y_n\}_{n\geq 0}$ be a discrete-time Markov chain embedded on $\mcu,$ with transition 
probabilities given by
$$\begin{array}{ll}
P_{i,i+1}=\displaystyle\frac{\lambda}{\lambda +1},&  \ i\geq 0, \\ \\
P_{i,j}=\displaystyle\frac{r\mu_{ij}^G+(1-r)\mu_{ij}^B}{\lambda +1}, &  \  0\leq j\leq i.\\
\end{array}$$

Ergodicity of $\{Y_n\}$ implies that the time until extintion of $\mcu$ has finite mean.

Observe that $\{Y_n\}$ is irreducible and aperiodic. We use Foster's theorem to show that $\{Y_n\}_{n\geq 0}$ is ergodic for $0 \leq r<1$, $0<p<1$ 
and $\lambda>0$. Consider the function
$f:\mathbb{N}\rightarrow \mathbb{R}^+$ defined by $f(i)=i+1$,  $\epsilon>0$ and the set
$$A:=\left\{i\in \mathbb{N}: \frac{\lambda-i(1-r)q}{1+\lambda}-\frac{rq(1-q^{i})}{p(1+\lambda)} >-\epsilon\right\}.$$ 

For $0 \leq r<1,$ $0<p<1$ and $\lambda>0,$ the set $A$ is finite. Moreover we have that

\noindent $\begin{array}{lll}
\bullet \ \mathbb{E}[f(Y_{n+1})&-&f(Y_{n}) \ | \ Y_n=i]=\displaystyle\sum_{j=0}^{i+1}[f(j)-f(i)]P_{i,j}\\ \\
&=& \displaystyle\frac{\lambda}{1+\lambda}+\sum_{j=0}^i (j-i)\left[       \frac{r\mu_{ij}^G+(1-r)\mu_{ij}^B}{1+\lambda}\right]\\ \\
&=&\displaystyle\frac{\lambda}{1+\lambda}+\frac{1}{1+\lambda}\left[-riq^i+r\sum_{j=1}^i (j-i)pq^{i-j} \right.
\\ \\
&& \ \left. + \ (1-r)\displaystyle\sum_{j=0}^i(j-i){i\choose j}p^jq^{i-j}   \right]\\ \\
&=& \displaystyle\frac{\lambda-i(1-r)q}{1+\lambda}-\frac{rq(1-q^{i})}{p(1+\lambda)}\\ \\
&\leq & -\epsilon \quad \text{for } i\notin A.
\end{array}$\\ \\

\noindent$\begin{array}{lll}
\bullet \ \mathbb{E}[f(Y_{n+1}) &\ |& \ Y_n=i] = \displaystyle\sum_{j=0}^{i+1}f(j)P_{i,j} \le (i+2)^2 < \infty \text{ for } i\in A. 
\end{array}$\\

%

\noindent It follows from Foster's theorem that $\{Y_n\}$ is ergodic and that concludes the proof.
\end{proof}

Seeking the proof of the other results we define the following auxiliary process. \\

\noindent
\textbf{Auxiliary process $(Z_n^{r,i})_{n\geq 0}$}:

\noindent 
Consider $\mcd$ and $\mct$. We define $Z_0^{r,i}=1$ for $i=2,3$, the number 
of colonies present at time 0 in each model. As soon as it collapses, $Z_1^{r,i}$, a random number 
of colonies will be created, the first generation. Each one of these colonies will give birth (at 
different times) to a random number of new colonies, the second generation. Let us define
this quantity by $Z_2^{r,i}$. In general, for $n \geq 1$, if $Z_{n-1}^{r,i} = 0$ then 
$Z_n^{r,i}=0$. On the other hand, if
$Z_{n-1}^{r,i} \geq 1$ then $Z_n^{r,i}$  is the number of colonies generated by the $(n-1)-th$ 
generation of colonies. 

From the fact that the numbers of descendants of different colonies are independent and have
the same distribution, we claim that $\{Z_n^{r,i}\}_{n \in \bbN}$ is a Galton-Watson process.

\begin{obs}\label{AuxPro} For $i=2,3,$ observe that  $\mci$ dies out if and only if  
$\{Z_n^{r,i}\}_{n \in \bbN}$ dies out, which in turn happens almost surely if and only if $\mathbb{E}[Z_1^{r,i}] \leq 1$. The probability
of extinction for  $\{Z_n^{r,i}\}_{n \in \bbN}$ is the smallest non-negative solution of 
$\phi_{r,i}(s)=s,$ 
where $\phi_{r,i}(s)$ is the probability generating function of $Z_1^{r,i}$. 
\end{obs}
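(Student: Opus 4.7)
The remark decomposes into three assertions: (a) $\mci$ dies out if and only if $\{Z_n^{r,i}\}_{n\in\bbN}$ dies out; (b) a Galton--Watson process dies out a.s.\ if and only if its mean offspring number is $\leq 1$; (c) its extinction probability is the least nonnegative fixed point of the offspring generating function. The plan is to verify (a) directly from the model definition, confirming along the way that $\{Z_n^{r,i}\}$ really is Galton--Watson, and then to invoke the classical theory for (b) and (c).

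For the Galton--Watson structure, I view the colonies of $\mci$ as vertices of a genealogical tree whose root is the initial colony and whose children of any vertex are the colonies created when that colony collapses. To check that this is a Galton--Watson family tree, I need the offspring counts of distinct colonies to be independent and identically distributed. In $\mcd$ this is immediate since the model stipulates independent Poisson/exponential clocks for each colony. In $\mct$ the only possible coupling is the spatial-competition rule during dispersion, but that rule applies only among survivors of a single collapse and explicitly \emph{disregards the number of colonies already present at that vertex}; consequently pre-existing colonies neither block nor are affected by new births, and the number of offspring a colony produces is determined solely by its own birth and collapse clocks together with independent uniform neighbor-choices. This gives i.i.d.\ offspring counts distributed as $Z_1^{r,i}$.

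The equivalence of extinctions then follows easily. If $Z_n^{r,i}=0$ for some $n$, the genealogical tree is finite; since every colony has an a.s.\ finite lifetime (exponential of mean $1$), after finite real time all colonies have collapsed and $\mci$ is empty. Conversely, if $\mci$ is empty at some finite time $T$, then no new colonies are created after $T$; standard non-explosion of the Markov branching process of colonies guarantees that only finitely many colony-events occur in $[0,T]$, so the genealogical tree is finite and $Z_n^{r,i}=0$ for some $n$. With the Galton--Watson structure in hand, (b) and (c) are classical (see, e.g., Athreya--Ney); the only nondegeneracy hypothesis to check is that $Z_1^{r,i}$ is not a.s.\ equal to $1$, which holds because with positive probability the initial colony's first catastrophe kills all individuals and produces zero offspring (as $q>0$).

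The main obstacle is step (a), and specifically the independence of colony dynamics in the spatially restricted model $\mct$. This is not a computation but a careful reading of the dispersion rule: once one observes that the competition is only among co-generated survivors of the same collapse and is insensitive to the surrounding configuration, the Galton--Watson structure and the equivalence of extinctions follow without further work.
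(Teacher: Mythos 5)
Your proposal is correct and follows essentially the same route as the paper: identify the colony genealogy as a Galton--Watson process with offspring variable $Z_1^{r,i}$ and invoke the classical extinction criterion and fixed-point characterization of the extinction probability. The paper states this as a remark with the i.i.d.\ offspring structure merely asserted, whereas you usefully spell out the details it leaves implicit (independence under the dispersion rule of $\mct$, the equivalence of the two extinction events via finite lifetimes and non-explosion, and the nondegeneracy condition $\mathbb{P}[Z_1^{r,i}=0]>0$).
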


\begin{lem}\label{L:disp}  The probability generating function of $Z_1^{r,2}$ is given by:
\[ \phi_{r,2}(s)=\frac{1}{1+\lambda p}\left[q+\frac{r(\lambda +1)ps}{1+\lambda -\lambda s}+\frac{(1-r)(\lambda +1)ps}{1+\lambda p - \lambda p s}\right] \]
and
\[ \mathbb{E}[Z_1^{r,2}]=\frac{p(\lambda+1)^2r}{\lambda p+1} +p(\lambda+1)(1-r).\]
\end{lem}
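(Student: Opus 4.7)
The plan is to compute the probability generating function $\phi_{r,2}(s) = \mathbb{E}[s^{Z_1^{r,2}}]$ by conditioning on the size $N$ of the colony at the moment of its first collapse, and then to derive the mean by differentiating the result at $s = 1$. First I would identify the distribution of $N$. Since the colony starts with one individual and the number of subsequent births forms a Poisson process of rate $\lambda$ independently of the collapse clock $T \sim \mathrm{Exp}(1)$, the conditional count given $T = t$ is Poisson$(\lambda t)$; integrating out $T$ gives $N - 1 \sim \text{Geometric}(1/(1 + \lambda))$, and hence the PGF $G(t) := \mathbb{E}[t^N] = t/(1 + \lambda - \lambda t)$.

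Next, conditional on $N = k$, the PGF of the number of survivors is $(q + ps)^k$ under the binomial effect and, using the definition of $\mu_{kj}^G$, equal to $q^k + ps(q^k - s^k)/(q - s)$ under the geometric effect. Weighting by $r$ and $1 - r$ respectively and summing against the law of $N$ yields
\[
\phi_{r,2}(s) = r\,G(q) + \frac{rps}{q - s}\bigl[G(q) - G(s)\bigr] + (1 - r)\,G(q + ps),
\]
which reduces the problem to evaluating $G$ at $t = q$, at $t = s$, and at $t = q + ps$.

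The main obstacle is the algebraic simplification into the precise shape stated in the lemma; although routine, it hinges on two clean cancellations. First, $G(q) - G(s)$ placed over a common denominator has numerator $(1 + \lambda)(q - s)$, so the apparently singular factor $1/(q - s)$ coming from the geometric effect disappears cleanly. Second, the binomial piece $G(q + ps) = (q + ps)/(1 + \lambda p - \lambda p s)$ can be split by partial fractions as $q/(1 + \lambda p) + (1 + \lambda) p s / [(1 + \lambda p)(1 + \lambda p - \lambda p s)]$, the coefficient $(1+\lambda)p$ arising from the identity $p(1+\lambda p) - q \lambda p = p(1 + \lambda)$. Collecting the common factor $1/(1 + \lambda p)$ then recovers the announced formula for $\phi_{r,2}(s)$.

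For the mean, I would differentiate $\phi_{r,2}$ at $s = 1$. Since both rational denominators $1 + \lambda - \lambda s$ and $1 + \lambda p - \lambda p s$ equal $1$ at $s = 1$, the quotient rule collapses and the two derivative contributions reduce to $rp(1 + \lambda)^2$ and $(1 - r)p(1 + \lambda)(1 + \lambda p)$; dividing by $1 + \lambda p$ yields $\mathbb{E}[Z_1^{r,2}] = rp(1 + \lambda)^2/(1 + \lambda p) + (1 - r)p(1 + \lambda)$, as asserted.
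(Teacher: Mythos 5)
Your argument is correct, and it takes a genuinely different route from the paper. The paper integrates the exponential collapse time against the Poisson birth count to obtain the explicit probability mass functions of the number of survivors under the pure binomial and pure geometric effects ($\mathbb{P}[Z_B=k]$ and $\mathbb{P}[Z_G=k]$ turn out to be geometric-type laws), and only then sums geometric series to get $\phi_B$, $\phi_G$ and their mixture. You instead condition on the colony size $N$ at the collapse, observe that $N-1$ is geometric so that $G(t)=\mathbb{E}[t^N]=t/(1+\lambda-\lambda t)$, and exploit the compound structure: the binomial effect gives the composition $G(q+ps)$, while the geometric effect gives $G(q)+\frac{ps}{q-s}[G(q)-G(s)]$, after which the two cancellations you identify (the factor $(1+\lambda)(q-s)$ in the numerator of $G(q)-G(s)$, and the partial-fraction split of $G(q+ps)$) yield the stated $\phi_{r,2}$; the differentiation at $s=1$ is routine and your values are right. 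Your route is shorter and avoids the negative-binomial series manipulations, but note that the paper's explicit distributions of $Z_B$ and $Z_G$ are not a detour: they are reused verbatim in the proof of the spatially restricted case (Lemma \ref{L: disp2}), where the probability generating function alone would not suffice, so your approach would need to be supplemented there. One small slip: the identity you invoke for the binomial piece should read $p(1+\lambda p)+q\lambda p=p(1+\lambda)$ (plus, not minus); the partial-fraction decomposition you state is nevertheless the correct one, so this does not affect the result.
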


\begin{proof}
$Z_1^{r,2}$ is the number of colonies in the first generation of $\mcd.$ 
Denote $Z_B:=Z_1^{0,2}$ and $Z_G:=Z_1^{1,2}.$ Firstly we show that

\begin{eqnarray}\label{E1:lemaaux1}
\mathbb{P}[Z_B=k]&=&\left\{\begin{array}{ll} \displaystyle\frac{1+\lambda}{\lambda (1+\lambda p)}\left(\frac{\lambda p}{1+\lambda p}\right)^ k,& k\geq 1 \vspace{0,2cm} \\
\displaystyle\frac{q}{1+\lambda p}, & k=0.\end{array}\right.
\end{eqnarray} 
 \begin{eqnarray}\label{E2:lemaaux1}
 \mathbb{P}[Z_G=k]&=&\left\{\begin{array}{ll} \displaystyle\frac{p}{1+\lambda p}\left(\frac{\lambda }{1+\lambda }\right)^ {k-1},& k\geq 1 \vspace{0,2cm}\\
\displaystyle\frac{q}{1+\lambda p}, & k=0.\\\end{array}\right.
\end{eqnarray}

\begin{defn} Let us consider the following random variables

\begin{itemize}
\item $T$ the lifetime of the collony until the collapse time;
\item $f_T(t)$ the density of the random variable T;
\item $X_T$ the amount of individuals created in a collony until it collapes.
\end{itemize}
\end{defn}

Observe that
\begin{eqnarray}
\label{eq: ZBEZG}
\mathbb{P}[Z_B=k]=\int_0^{\infty} f_T(t) \sum_{n= 0 \vee k-1}^{\infty} \mathbb{P}(X_T=n|T=t)\mathbb{P}(Z_B=k|X_T=n; T=t)dt.
\end{eqnarray}

Then, for $k=0$, we have that
\[ \mathbb{P}[Z_B=0]=\int_0^\infty e^{-t} \sum_{n=0}^\infty \frac{e^{-\lambda t}(\lambda t)^n}{n!} q^{n+1} dt=q\int_0^\infty e^{-(\lambda p+1)t}  dt=\frac{q}{1+\lambda p}. \]

For $k\geq 1,$ 
\begin{eqnarray*}
\mathbb{P}[Z_B=k]&=&\displaystyle\int_0^\infty e^{-t} \sum_{n=k-1}^\infty \frac{e^{-\lambda t} (\lambda t)^n}{n!}{n+1 \choose k} p^kq^{n+1-k} dt   \\  
&=&q\left(\displaystyle\frac{p}{q}\right)^k \displaystyle\sum_{n=k-1}^\infty {n+1 \choose k} \frac{(\lambda q)^n}{n!} \int_0^\infty e^{-(\lambda +1)t} \  t^n dt  \\
&=&q\left(\displaystyle\frac{p}{q}\right)^k \displaystyle\sum_{n=k-1}^\infty {n+1 \choose k} \frac{(\lambda q)^n}{n!} \frac{\Gamma(n+1)}{(\lambda +1)^{n+1}} \\
&=&\displaystyle\frac{q}{\lambda +1} \left(\frac{p}{q}\right)^k \displaystyle\sum_{n=k-1}^\infty {n+1 \choose k} \left(\frac{\lambda q}{\lambda +1}\right)^{n} \\ 
&=&\displaystyle\frac{q}{\lambda +1} \left(\frac{p}{q}\right)^k \left(\frac{\lambda q}{\lambda +1}\right)^{k-1}\displaystyle\sum_{j=0}^\infty {j+k \choose k} \left(\frac{\lambda q}{\lambda +1}\right)^{j} \\
&=&\displaystyle\frac{q}{\lambda +1} \left(\frac{p}{q}\right)^k \left(\frac{\lambda q}{\lambda +1}\right)^{k-1} \left(1-\frac{\lambda q}{\lambda +1}\right)^{-(k+1)}\\
&=&\displaystyle\frac{1+\lambda}{\lambda (1+\lambda p)}\left(\frac{\lambda p}{1+\lambda p}\right)^ k.
\end{eqnarray*}

Similarly to~(\ref{eq: ZBEZG}), we obtain the distribution of $Z_G$. First observe that
$\mathbb{P}[Z_B=0]=\mathbb{P}[Z_G=0]$. 
Besides, for $k\geq 1,$ 
\begin{eqnarray*}
\mathbb{P}[Z_G=k]&=&\displaystyle\int_0^\infty e^{-t} \sum_{n=k-1}^\infty \frac{e^{-\lambda t}(\lambda t)^n}{n!} pq^{n+1-k} dt \\
&=&pq^{1-k}\displaystyle\sum_{n=k-1}^\infty \frac{(q\lambda )^{n}}{n!} \int_0^\infty e^{-(\lambda +1)t} \ t^ndt \\
&=&  pq^{1-k}\displaystyle\sum_{n=k-1}^\infty \frac{(q\lambda )^{n}}{n!} \frac{\Gamma (n+1)}{(\lambda +1)^{n+1}}   \\
&=&\displaystyle\frac{pq^{1-k}}{\lambda +1}  \displaystyle\sum_{n=k-1}^\infty \left(\frac{q\lambda }{\lambda +1}\right)^n \\
&=&\displaystyle\frac{pq^{1-k}}{\lambda +1} \left(\frac{q\lambda }{\lambda +1}\right)^{k-1} \displaystyle\sum_{j=0}^\infty \left(\frac{q\lambda }{\lambda +1}\right)^j  \\
&=&\displaystyle\frac{p}{1+\lambda p}\left(\frac{\lambda }{\lambda +1 }\right)^ {k-1}.
\end{eqnarray*}

By (\ref{E1:lemaaux1}) we obtain the probability generating function of $Z_B$,
$$\begin{array}{lll}
\phi_B(s)&=&\bbE[s^{Z_B}]=\displaystyle\sum_{k\geq 0}  \mathbb{P}[Z_B=k] \ s^k  \\
&=& \displaystyle\frac{q}{1+\lambda p} + \frac{1+\lambda}{\lambda(1+\lambda p)}\sum_{k\geq 1} \left(\frac{\lambda p s}{1+\lambda p}\right)^{k} \vspace{0,2cm} \\
&=&\displaystyle\frac{1}{1+\lambda p}\left[q+\frac{(\lambda +1)ps}{1+\lambda p -\lambda ps}\right].
\end{array}$$

Besides, from (\ref{E2:lemaaux1}), we obtain the probability generating function of $Z_G$,
$$\begin{array}{lll}
\phi_G(s)&=&\bbE[s^{Z_G}]=\displaystyle\sum_{k\geq 0}  \mathbb{P}[Z_G=k] \ s^k  \\
&=& \displaystyle\frac{q}{1+\lambda p} + \frac{sp}{1+\lambda p}\sum_{k\geq 1} \left(\frac{\lambda  s}{1+\lambda }\right)^{k-1} \vspace{0,2cm} \\
&=&\displaystyle\frac{1}{1+\lambda p}\left[q+\frac{(\lambda +1)ps}{1+\lambda -\lambda s}\right].
\end{array}$$

Finaly, the desired result follows after we observe that $$\phi_{r,2}(s)=r\phi_G(s)+(1-r)\phi_B(s),$$ 
and computing  $\mathbb{E}[Z_1^{r,2}]=\phi_{r,2}'(1).$\\
 \end{proof}

\begin{lem}
\label{L: disp2}  The probability generating function of $Z_1^{r,3}$ is given by: 

\[ \psi_{r,3}(s)=r\psi_{G}(s)+(1-r)\psi_{B}(s),\] where
{\small  
$$\psi _B(s):=\frac{q}{1+\lambda p}+\frac{m(1+\lambda)}{\lambda}\sum_{k=1}^m  {m \choose k}\left[\frac{-\lambda p s}{m(1+\lambda p)}\right]^k\sum_{j=0}^k {k \choose j}\frac{(-1)^j j^k}{m(1+\lambda p)-\lambda p j},$$} 
{\small $$\psi_G(s):=\frac{q}{1+\lambda p}+\frac{(1+\lambda)ps}{\lambda p +1}\sum_{k=1}^m  {m \choose k}\left[\frac{-\lambda  s}{m(1+\lambda )}\right]^{k-1}\sum_{j=0}^k {k \choose j}\frac{(-1)^{j-1}j^k}{m(1+\lambda )-\lambda j}.$$}  
Furthermore,
 $$\mathbb{E}[Z_1^{r,3}]=\frac{mp(\lambda +1)^2r}{(m+\lambda)(\lambda p +1)}+\frac{mp(\lambda +1)(1-r)}{m+ \lambda  p}.   $$

\end{lem}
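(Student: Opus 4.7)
The proof follows the template of Lemma~\ref{L:disp}. The decomposition
\[
\psi_{r,3}(s) \;=\; r\,\psi_G(s) \;+\; (1-r)\,\psi_B(s)
\]
is immediate by conditioning on whether the first collapse is of geometric type (probability $r$) or of binomial type (probability $1-r$): the PGF of $Z_1^{r,3}$ is then a convex combination of the two pure-type PGFs. It therefore suffices to compute $\psi_B$ and $\psi_G$ separately and to verify the claim for the mean.

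The key probabilistic observation is a two-step description of $Z_1^{0,3}$ in the spatial model $\mct$. First, a random number $S$ of individuals survives the collapse; second, each survivor independently picks one of the $m$ neighboring vertices uniformly at random, and $Z_1^{0,3}$ counts the number of \emph{distinct} neighbors chosen. Because in the unrestricted model $\mcd$ every survivor founds its own colony, the number of survivors in $\mct$ has the same distribution as the offspring count $Z_B$ studied in Lemma~\ref{L:disp}. Hence $Z_1^{0,3}$ is equal in distribution to $N_{Z_B}$, where $N_S$ denotes the classical occupancy variable (number of non-empty bins when $S$ balls are thrown uniformly into $m$ bins); analogously, $Z_1^{1,3}$ is equal in distribution to $N_{Z_G}$.

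For the generating functions, the plan is to apply the standard inclusion--exclusion identity
\[
\mathbb{E}[s^{N_S}] \;=\; \frac{1}{m^S}\sum_{k=1}^{m} s^{k}\binom{m}{k}\sum_{j=0}^{k}(-1)^{k-j}\binom{k}{j}\,j^{S} \qquad (S\geq 1),
\]
with $\mathbb{E}[s^{N_0}]=1$. Taking expectation over $S=Z_B$, swapping the inner summation with the expectation, and substituting the closed form $\phi_B(j/m)$ from Lemma~\ref{L:disp} yields, after isolating the $S=0$ contribution $q/(1+\lambda p)$, a double sum in $k$ and $j$. The Stirling-type identities $\sum_{j=0}^{k}(-1)^{k-j}\binom{k}{j}\,j^{n}=0$ for $n<k$ then allow one to rewrite, via partial fractions applied to $j^\ell/(1-\alpha j)$ with $\alpha=\lambda p/[m(1+\lambda p)]$, the naturally appearing factor of $j$ in terms of $j^k$, producing exactly the arrangement displayed in the statement. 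The computation of $\psi_G$ is formally identical with $\phi_G$ replacing $\phi_B$. An equivalent alternative is to integrate directly over the collapse time $T\sim\mathrm{Exp}(1)$, condition successively on the Poisson count of new births and on the binomial (resp.\ geometric) count of survivors, and then perform the resulting geometric summations, mirroring the argument of Lemma~\ref{L:disp}.

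The mean is obtained more quickly: by symmetry and linearity,
\[
\mathbb{E}[N_S] \;=\; m\bigl(1-\mathbb{E}[(1-1/m)^{S}]\bigr),
\]
so $\mathbb{E}[Z_1^{0,3}]=m\bigl(1-\phi_B(1-1/m)\bigr)$ and $\mathbb{E}[Z_1^{1,3}]=m\bigl(1-\phi_G(1-1/m)\bigr)$. Direct simplification using the formulas of Lemma~\ref{L:disp} gives
\[
\mathbb{E}[Z_1^{0,3}]=\frac{mp(1+\lambda)}{m+\lambda p}, \qquad \mathbb{E}[Z_1^{1,3}]=\frac{mp(1+\lambda)^2}{(m+\lambda)(1+\lambda p)},
\]
and the stated expression follows by linearity in $r$. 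The main obstacle throughout is algebraic rather than conceptual: matching the natural inclusion--exclusion output with the precise presentation of the statement---in which the $s$-dependence is collected as $[-\lambda p s/(m(1+\lambda p))]^k$ and the inner summand carries $j^k$ rather than $j^S$---requires careful bookkeeping of the $S=0$ term and application of the above Stirling-type vanishing identities to eliminate the lower-order partial-fraction contributions.
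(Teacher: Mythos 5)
Your proposal is correct and follows essentially the same route as the paper: the survivors of a collapse are identified with $Z_B$/$Z_G$ from Lemma~\ref{L:disp}, the offspring count is the occupancy (surjection) variable treated by inclusion--exclusion, and the mean is obtained exactly as in the paper via the indicator/symmetry identity $\mathbb{E}[Z_1^{r,3}]=m\,\mathbb{P}[I_1=1]=m\bigl(1-\phi(1-1/m)\bigr)$. The only cosmetic difference is that the paper sums the geometric series over the survivor count directly inside the inclusion--exclusion sum, arriving at the stated $j^k$ form at once, whereas your occupancy-PGF route first yields an equivalent simpler expression and then needs the finite-difference vanishing identities and polynomial division to match the displayed formula---a step that is valid but could be spelled out.
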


\begin{proof} Consider $\mct$ starting from one colony placed at some vertex $x \in \mathcal{G}_m$. 
Besides the quantity already defined $Z_1^{r,3},$ consider also $Z$ the number of individuals
that survived right after the collapse, before they compete for space.

From the definition of $\mct$ it follows that
\begin{equation}\label{E}
\mathbb{P}[Z=j]=r\mathbb{P}[Z_G=j]+(1-r)\mathbb{P}[Z_B=j],
\end{equation} where $Z_B$ and $Z_G$ are the random variables defined in (\ref{E1:lemaaux1}) and (\ref{E2:lemaaux1}), respectively. By other side, for $k\in\{1,\ldots,m\}$ and $j\geq k$, observe that 
$$\mathbb{P}[Z_1^{r,3}=k|Z=j]={m \choose k}\frac{T(j,k)}{m^j}.$$
By the inclusion-exclusion principle, $T(j,k)=\sum_{i=0}^k{k \choose i}(-1)^i(k-i)^j$ is the number of surjective functions
whose domain is a set with $j$ elements and whose codomain is a set with $k$ elements. See  Tucker~\cite{Tucker} p. 319.


Then, for $k\in\{1,\ldots,m\},$
\begin{eqnarray}\label{E4: lemaaux2}
\mathbb{P}[Z_1^{r,3}=k]&=&r\sum_{j=k}^\infty {m \choose k}\frac{T(j,k)}{m^j}\mathbb{P}[Z_G=j] \nonumber\\
&&+(1-r)\sum_{j=k}^\infty {m \choose k}\frac{T(j,k)}{m^j}\mathbb{P}[Z_B=j].
\end{eqnarray}

By (\ref{E1:lemaaux1}), we have that\\

{\small
$\displaystyle\sum_{j=k}^\infty {m \choose k}\frac{T(j,k)}{m^j}\mathbb{P}[Z_B=j]$
\begin{eqnarray}\label{E5: lemaaux2}
&=&{m \choose k} \frac{1+\lambda}{\lambda(\lambda p+1)}\sum_{j=k}^\infty \left[\frac{\lambda p}{m(\lambda p+1)}\right]^{j}T(j,k)\nonumber\\
&=&{m \choose k} \frac{1+\lambda}{\lambda(\lambda p+1)} \left[\frac{\lambda p}{m(\lambda p+1)}\right]^{k}\sum_{j=0}^\infty \left[\frac{\lambda p}{m(\lambda p+1)}\right]^{j}T(j+k,k)\nonumber\\
&=& {m \choose k} \frac{1+\lambda}{\lambda(\lambda p+1)} \left[\frac{\lambda p}{m(\lambda p+1)}\right]^{k}\sum_{j=0}^\infty \left[\frac{\lambda p}{m(\lambda p+1)}\right]^{j}\sum_{i=0}^k {k \choose i}(-1)^i(k-i)^{j+k}\nonumber\\
&=& {m \choose k} \frac{1+\lambda}{\lambda(\lambda p+1)} \left[\frac{\lambda p}{m(\lambda p+1)}\right]^{k}\sum_{i=0}^k {k \choose i}(-1)^i(k-i)^{k}\sum_{j=0}^\infty \left[\frac{\lambda p(k-i)}{m(\lambda p+1)}\right]^{j}\nonumber\\
&=&{m \choose k} \frac{m(1+\lambda)}{\lambda} \left[\frac{\lambda p}{m(\lambda p+1)}\right]^{k}\sum_{i=0}^k {k\choose i}\frac{(-1)^i(k-i)^k}{m(\lambda p+1)-\lambda p (k-i)}.
\end{eqnarray}}

Similarly, by (\ref{E2:lemaaux1}), we have that\\

{\small
$\displaystyle\sum_{j=k}^\infty {m \choose k}\frac{T(j,k)}{m^j}\mathbb{P}[Z_G=j]$
\begin{eqnarray}\label{E6: lemaaux2}
&=&{m \choose k} \frac{p}{m(\lambda p+1)}\sum_{j=k}^\infty \left[\frac{\lambda}{m(\lambda+1)}\right]^{j-1}T(j,k)\nonumber\\
&=&{m \choose k} \frac{p}{m(\lambda p+1)}\left[\frac{\lambda}{m(\lambda+1)}\right]^{k-1}\sum_{j=0}^\infty \left[\frac{\lambda}{m(\lambda+1)}\right]^{j}T(j+k,k)\nonumber\\
&=&{m \choose k} \frac{p}{m(\lambda p+1)}\left[\frac{\lambda}{m(\lambda+1)}\right]^{k-1}\sum_{j=0}^\infty \left[\frac{\lambda}{m(\lambda+1)}\right]^{j}\sum_{i=0}^k {k \choose i}(-1)^i(k-i)^{j+k}\nonumber\\
&=&{m \choose k} \frac{p}{m(\lambda p+1)}\left[\frac{\lambda}{m(\lambda+1)}\right]^{k-1}\sum_{i=0}^k {k \choose i}(-1)^i(k-i)^{k}\sum_{j=0}^\infty \left[\frac{\lambda(k-i)}{m(\lambda+1)}\right]^{j}\nonumber\\
&=&{m \choose k}\frac{(1+\lambda)p}{\lambda p +1}\left[\frac{\lambda  }{m(1+\lambda )}\right]^{k-1}\sum_{i=0}^k {k \choose i}\frac{(-1)^i (k-i)^k}{m(1+\lambda )-\lambda(k-i)}.
\end{eqnarray}}

Finally, observe that $\mathbb{P}[Z_1^{r,3}=0]=\mathbb{P}[Z=0]=q/(1+\lambda p)$. With (\ref{E4: lemaaux2}),(\ref{E5: lemaaux2}) and (\ref{E6: lemaaux2}) 
we obtain the probability generating function of $Z_1^{r,3}$.\\

To compute $\mathbb{E}[Z_1^{r,3}],$ consider enumerating each neighbour of 
the initial vertex $x$, from 1 to  $m$. Next we describe  
$Z_1^{r,3}=\sum_{i=1}^m I_{i},$ where $I_{i}$ is the indicator function of the event \{A new colony is created in the first generation at the $i-th$ neighbour vertex of $x$ \}. Therefore, 
\begin{eqnarray}\label{E1: lemaaux2}
\mathbb{E}[Z_1^{r,3}]=\sum_{i=1}^m \mathbb{P}[I_{i}=1]=m\mathbb{P}[I_{1}=1].
\end{eqnarray}

Observe that
\begin{eqnarray}
\mathbb{P}[I_{1}=1|Z=k]
=1-\left(\frac{m-1}{m}\right)^k \nonumber
\end{eqnarray} 
and that by using (\ref{E}) we have that
\begin{eqnarray}\label{E2: lemaaux2}
\mathbb{P}[I_{1}=1]&=&r\sum_{k=1}^\infty\left[1-\left(\frac{m-1}{m}\right)^k\right]\mathbb{P}[Z_G=k] \nonumber \\
&&+(1-r)\sum_{k=1}^\infty\left[1-\left(\frac{m-1}{m}\right)^k\right]\mathbb{P}[Z_B=k]. 
\end{eqnarray}

Substituting (\ref{E1:lemaaux1}) and (\ref{E2:lemaaux1}) in (\ref{E2: lemaaux2}) one can see that
\begin{eqnarray}\label{E3: lemaaux2}
\mathbb{P}[I_1=1]&=&\frac{p(\lambda +1)^2r}{(m+\lambda)(\lambda p +1)}+\frac{p(\lambda +1)(1-r)}{m+ \lambda  p}.
\end{eqnarray}



Finally, plugging (\ref{E3: lemaaux2}) into (\ref{E1: lemaaux2}) we obtain the desired result.
\end{proof}

\begin{proof}[Proofs of Theorems \ref{th:disp1} and \ref{th:dispesp1}]
From Remark~\ref{AuxPro} one can see that $\mci$ survives if and only if $\mathbb{E}[Z_n^{r,i}]>1.$  
From Lemmas~\ref{L:disp} and \ref{L: disp2} the result follows.
\end{proof}

\begin{proof}[Proofs of Theorems \ref{th:disp2} and \ref{th:dispesp2}]
From Remark~\ref{AuxPro} we have that the probabilities of extinction, $\rho_2(r)$ and $\rho_3(r)$, of 
$\mcd$ and $\mct$, are the smallest
solution in $[0,1]$ of $\phi_{r,i}(s)=s$  for $i=2$ and $3$ respectively.  The desired results follow from Lemmas~ \ref{L:disp} and \ref{L: disp2}. 
\end{proof}

\begin{proof}[Proof of Proposition \ref{prop-disp-est} $(i)$]
First we define the following functions
\begin{eqnarray*}
f_m(\lambda)&:=&\frac{mp(1+\lambda)^2r}{(m+\lambda)(\lambda p +1)}+\frac{mp(1+\lambda)(1-r)}{m+ \lambda  p} ,\\ 
f(\lambda)&:=&\frac{p(\lambda+1)^2r}{\lambda p+1} +p(\lambda+1)(1-r).
\end{eqnarray*} 
From Theorems \ref{th:disp1} and \ref{th:dispesp1} it follows that 
$$\ld=\inf\{\lambda: f(\lambda)>1\},$$
$$\lt=\inf\{\lambda: f_m(\lambda)>1\}.$$

Observe that $f_m$ and $f$ are continuous functions on $[0,\infty),$ such that $f_m(0)=f(0)=p<1,$ $\displaystyle\lim_{\lambda\rightarrow \infty}f(\lambda)=\infty$ and $\displaystyle\lim_{\lambda\rightarrow \infty}f_m(\lambda)=m.$\\

Moreover, $\{f_m\}_{m \geq 1}$ is a strictly increasing sequence of strictly increasing functions on $(0,\infty)$ such that $\displaystyle\lim_{m\rightarrow\infty} f_m(\lambda)=f(\lambda)$. Similarly, $f$ is a
strictly increasing function.

Then, from the intermediate value theorem and the strict monotonicity of $f$ we have that there is a unique $\lambda_*\in(0,\infty)$ such that $f(\lambda_*)=1.$ Moreover, from the definition of $\ld$ and the continuity of $f$, we have that
\begin{eqnarray}\label{valorcritico}
f(\lambda)=1  &\iff& \lambda = \ld.
\end{eqnarray}  
Thus, $\lambda_*= \ld.$ Similarly, for $m\geq 2,$ we obtain that 
\[\lt\in(0,\infty)\]
and
\begin{eqnarray}
 f_m(\lambda)=1  &\iff& \lambda = \lt. \nonumber
\end{eqnarray} 
Besides, from the strict monotonicity of $f_1$, it follows that 
\[\lambda^3(p,r,1)=\infty.\]


In order to show that $\lt > \lambda^3(p,r,m+1)$ for all $m \geq 2$ let us 
assume that $\lt \leq \lambda^3(p,r,m+1)$ for some $m \geq 2$ and 
proceed by contradiction. Note that 
\[ 1=f_m(\lt)\leq f_m(\lambda^3(p,r,m+1))<f_{m+1}(\lambda^3(p,r,m+1))=1 \] 
\noindent
which is cleary a contradiction.
Analogously one can show that \[\ld<\lt \textrm{ for all } m\geq 1.\]
\end{proof}

\begin{proof}[Proof of Proposition \ref{prop-disp-est} $(ii)$]
Let us restrict the domain of the functions $f_m$ and $f$ to $[0,\lambda^3(p,r,2)].$ Observe that $f_m$ and $f$ are continuous functions, that $\displaystyle\lim_{m\rightarrow\infty}f_m=f$ and that $f_m(\lambda)<f_{m+1}(\lambda)$ for all $\lambda \in [0,\lambda^3(p,r,2)].$  Then, from Theorem 7.13 in Rudin~\cite{Rudin} we have that
$f_m$ converges uniformly to $f$ on $[0,\lambda^3(p,r,2)]$.

From $(i)$ it follows that $\lt \in [0,\lambda^3(p,r,2)]$ for all $m\geq 2$ and
the existence of $\theta:=\displaystyle\lim_{m\rightarrow\infty} \lt.$ Then, from
the uniform convergence of $f_m$ to $f$, it follows that $f(\theta)=\displaystyle\lim_{m\rightarrow\infty} f_{m}(\lt)=1,$ (see Rudin \cite[exercise 9, chapter 7]{Rudin}). Finaly the result follows from (\ref{valorcritico}). 
\end{proof}

\section{Acknowledgments} The authors are thankful to Rinaldo Schinazi and Elcio Lebensztayn
for helpful discussions about the model. V. Junior and A. Rold\'an wish to thank the Instituto de 
Matem\'atica e Estat\'{\i}stica of Universidade de S\~ao Paulo for the warm hospitality during 
their scientific visits to that institute. The authors are thankful for the two anonymous referees 
for a careful reading and many suggestions and corrections that greatly helped to improve the 
paper.


\begin{thebibliography}{99}

\bibitem{AEL2007} {J.R.Artalejo, A.Economou and M.J.Lopez-Herrero.}  
Evaluating growth measures in an immigration process subject to binomial and geometric catastrophes.
\textit{Mathematical Biosciences and Engineering} \textbf{4}, (4), 573 - 594 (2007).

\bibitem{BGR1982}{P.J.Brockwell, J.Gani and S.I.Resnick.}
Birth, immigration and catastrophe processes.
Adv. Appl. Prob. 14, 709-731 (1982).

\bibitem{CairnsPollett}{B.Cairns and P.K. Pollet.}
Evaluating Persistence Times in Populations that are
Subject to Local Catastrophes
in “MODSIM 2003 International Congress on Modelling and Simulation”
(ed. D.A. Post), Modelling and Simulation Society of Australia and New Zealand, 747–752 (2003).



\bibitem{FMM1995} {G.Fayolle, V.A.Malyshev and M.V.Menshikov.}
Topics in the Constructive Theory of Countable Markov Chains. \textit{Cambridge University Press} (1995) .

\bibitem{KPR2016} {S.Kapodistria, T. Phung-Duc and J. Resing.} 
Linear birth/immigration-death process with binomial catastrophes.
\textit{Probability in the Engineering and Informational Sciences} \textbf{30} (1), 79-111 (2016).

\bibitem{Lanchier}{N.Lanchier.}
Contact process with destruction of cubes and hyperplanes: forest fires
versus tornadoes. 
\textit{J. Appl. Probab} \textbf{48}, 352-365 (2011). 

\bibitem{MRS2015}{F.P.Machado, A. Rold\'an-Correa and R.Schinazi.} 
Colonization and Collapse. \textit{arXiv:1510.02704} (2015). 

\bibitem{Rudin} {W. Rudin.} 
Principles of Mathematical Analysis, Third Edition. 
\textit{McGraw-Hill,Inc.}  (1976).

\bibitem{S2014} {R.Schinazi.} 
Does random dispersion help survival?
\textit{Journal of Statistical Physics}, \textbf{159}, (1), 101-107 (2015).

\bibitem{Tucker}{A.Tucker.} 
Applied Combinatorics 6\textsuperscript{th} ed.
\textit{John Wiley \& Sons, Inc}. (2012).

\end{thebibliography}
\end{document}